\newtheorem{thm}{Theorem}
\newtheorem{defi}{Definition}
\newtheorem{lem}{Lemma}
\newtheorem{rmk}{Remark}
\title[Gevery KAM equilibria]{Gevrey KAM equilibria for quasi-periodic  long-range Frenkel-Kontorova models}
\author{Yujia An}
\address{School of Mathematical Sciences, Beijing Normal University,
No. 19, XinJieKouWai St., HaiDian District, Beijing 100875, P. R. China}
\email{yjan@mail.bnu.edu.cn}
\author{Xifeng Su}
\address{School of Mathematical Sciences, Laboratory of Mathematics and Complex Systems (Ministry of Education)\\
Beijing Normal University,
No. 19, XinJieKouWai St., HaiDian District, Beijing 100875, P. R. China}
\email{xfsu@bnu.edu.cn, billy3492@gmail.com}
\date{\today}
\begin{document}
\maketitle

\begin{abstract}

We consider models of one-dimensional chains of non-nearest neighbor and many-body  interacting particles subjected to quasi-periodic media. 
We extend the results in \cite{12Su&delaLlavelongrange} from analytic to Gevrey regularity potentials. 
More precisely, we establish an a posteriori KAM theorem showing that in the Gevrey topology, given an approximate solution of equilibrium equation, 
which satisfies some appropriate non-degeneracy conditions and decay property, then there is a true solution nearby and the solution preserves both the quasi-periodicity and Gevrey regularity. 
The method of proof is based on a combination of  quasi-Newton methods and delicate estimates in spaces of Gevrey functions.

\end{abstract}

\tableofcontents

\section{Introduction}
The Frenkel-Kontorova model, originating from solid state physics, describes a chain of particles interacting with their nearest neighbors. The long distance generalization has a  clear physical meaning since the deformation produced by a dislocation causes effects at long distance (see e.g. \cite{FK39, Nabarro52}).  

More precisely, the configuration of the system is given by a sequence $\{u_n\}_{n\in\mathbb Z}$, where $u_n\in\mathbb R$ denotes the position of the $n-$th particle. The formal energy of the configuration $\{u_n\}_{n\in\mathbb Z}$ is given by 
\begin{equation}\label{long-rangemodel}
\mathscr S(\{u_n\}_{n\in\mathbb Z})=\sum_{i\in\mathbb Z}\sum_{L=0}^{\infty}\widehat H_L(u_i\alpha,u_{i+1}\alpha,\cdots,u_{i+L}\alpha),
\end{equation}
where $\alpha\in \mathbb{R}^d$ and $\widehat H_L: (\mathbb{T}^d)^{L+1} \rightarrow \mathbb{R}$. We remark that the form of $\widehat H_L$ encodes both the quasi-periodic properties of the media and the long-range interactions.

The physical states are then selected to be the critical points of the above formal energy functional, which are called the equilibrium configurations. 
We are interested in finding ``line-like" equilibrium configurations of the form
\begin{equation}\label{hullfunction}
    u_n=h(n\omega),\,\, \forall~ n\in\mathbb Z,
\end{equation}
 where $\omega\in\mathbb R$ is called the rotation number, and $h(\theta)=\theta+\tilde h(\theta)$ with $\tilde h$ is a quasi-periodic or an almost-periodic function which corresponds to quasi-periodic or almost-periodic media.
  The function $h$ is often referred to as the ``hull function" of the configuration in solid state physics. 
  
  In \cite{08delaLlave}, \cite{12Su&delaLlavelongrange}, \cite{24An&delaLlave&S&W&Y}, the corresponding KAM-type theorems (the existence of equilibrium configurations) for long-range and many-body interactions were established  respectively for cases where the interactions $\widehat H_L$ are
  \begin{itemize}
  \item analytic and periodic;
  \item analytic/Sobolev and quasi-periodic;
  \item analytic and almost-periodic.
  \end{itemize}
 
 Note that the long-range interactions lead to interesting physical effects (due to electrical, magnetic
or exchange forces, etc.) and the nearest-neighbor interactions could be thought of as an approximation. But, in general,
 they also make the corresponding models \eqref{long-rangemodel} impossible to obtain a dynamical system interpretation of the equilibrium
equations. Even when they do, the Hamiltonian description may be very singular or the number of freedom may be very large, so that the classical Hamiltonian KAM theory (such as transformation theory) does not apply.
  
On the other side, the quasi-periodic (or almost-periodic) nature of the interactions also makes it impossible to give
a straightforward dynamical meaning to the equilibrium equations (see \cite{12Su&delaLlave, LSZ16}). 
Moreover,  the quasi-periodic solutions correspond to the corrector considered in homogenization theory and  the paper \cite{LS03} pointed out that the addition of an extra incommensurate frequency causes a lack of compactness and breakdown of the arguments in the periodic case. This indicates that the variational construction of solutions may fail to exist.

Recently,  Gevrey functions has received a lot of interest  since those functions are related to many deep theorems of dynamical systems (e.g. KAM theorem, Nekhoroshev estimates). 
In particular, even if one begins with an analytic (or even polynomial) problem, several objects of interest such as asymptotic expansions  are only Gevrey \cite{BC19, BD22}.
  
  A natural question arises: 
  for interactions with Gevrey regularity, does the Gevrey KAM equilibria exist?

  When dealing with Gevrey regularity problems, two primary approaches are commonly employed. The first approach relies on approximation by analytic functions. 
  As established in \cite{03Popov}, this method involves approximating Gevrey functions by analytic ones and then applying classical KAM techniques.
  The second approach is based on direct norm estimates. In \cite{03M.J-Pierre&S.David}, Marco and Sauzin introduce an alternative Gevrey norm to analyze quasi-convex near-integrable Hamiltonian systems. Subsequent works \cite{17Lopes&Joao, 24You&Yuan} demonstrate that this norm admits an ``equivalence" in terms of  Fourier series, enabling techniques analogous to those used in analytic settings.

  In this work, adopting the direct norm approach, we establish the existence of Gevrey smooth quasi-periodic equilibrium configurations for the system with Gevrey class quasi-periodic interactions. Indeed, we prove that when the potential $\widehat H_L$ belongs to the Gevrey smoothness class and is quasi-periodic in the spatial variable, the corresponding equilibrium solutions inherit both the quasi-periodicity and Gevrey regularity of the interactions.

Moreover, once the existence of such equilibrium solutions is established, we immediately obtain a continuous family of equilibria. Specifically, for any solution of the form $u_n=n\omega+\tilde h(n\omega)$, where $\tilde h(\theta)=\hat h(\alpha\theta)$ with $\alpha\in\mathbb R^d$, the configuration $u_n^{\phi}=n\omega+\hat h(n\omega\alpha+\phi)$ remains an equilibrium for arbitrary $\phi\in\mathbb R$ .

Physically, this continuum of solutions indicates that the system possesses a sliding degree of freedom - the entire chain can undergo phase shifts ($\phi$-variations) while maintaining equilibrium, requiring vanishingly small energy input. Furthermore, when a continuous family of equilibria exists in our framework, the results of \cite{17delaLlave&Su} demonstrate that the equilibria we get are also the minimizers in the sense that any localized (compactly supported) perturbation of these equilibria increases the total energy of the system.

Before ending this section, we would remark that the way that we have formulated the result is an a posteriori theorem with which
given an approximate solution (e.g., obtained numerically) we can prove that there is
a true solution nearby. The method of proof is an iterative method which converges
quadratically in the Gevrey topology.  As in \cite{BlassdelaLlave13}, we point out that the iterative step described here can be implemented as an
efficient algorithm (low storage requirements and low operation count).

\section{Preliminary}
\subsection{Function spaces}
We consider the $d-$dimensional torus $\mathbb T^d:=\mathbb R^d/(2\pi\mathbb Z)^d$ for $d\ge2$, which serves as our fundamental space for studying quasi-periodic phenomena. We will investigate the properties and applications of Gevrey-class functions.

\begin{defi}
    Given $k\in\mathbb Z^d$, $\beta\ge1$, define 
    $|k|_{\beta}=|k_1|^{\frac1\beta}+\cdots+|k_d|^{\frac1\beta}$.
\end{defi}
   Obviously, for all $\beta\ge1$, we have $|k|_1\le|k|_{\beta}^{\beta}$.

\begin{defi}
    For $\beta\ge1$, a function $f\in C^{\infty}(\mathbb T^d)$ is called a Gevrey-$\beta$ function if there exists a constant $B>0$ such that $\sup_{k\in\mathbb N^d}\frac{B^{|k|}}{k!^{\beta}}\|\partial^k f\|_{C^0(\mathbb T^d)}<+\infty$. Denote by $G_{\beta}(\mathbb T^d)$ the space of Gevrey-$\beta$ functions.
\end{defi}
\begin{rmk}
    Note that Gevrey-$1$ functions coincide with analytic functions.
\end{rmk}
    
    For fixed $B$, the weighted norm $\|f\|'_{\beta,B}=\sup_{k\in\mathbb N^d}\frac{B^{|k|}}{k!^{\beta}}\|\partial^k f\|_{C_0(\mathbb T^d)}$ seems natural, but it lacks the Banach algebra property (Lemma \ref{Banana^-^}). To address this issue, Marco and Sauzin in \cite{03M.J-Pierre&S.David} first introduced the following alternative norm.
\begin{defi}
    Let $\beta\ge 1$, $R>0$, we define the space of Gevrey-$(\beta,R)$ functions on the $d$-dimensional torus as: 
    $$G_{\beta,R}(\mathbb T^d)=\left\{f\in C^{\infty}(\mathbb T^d):\|f\|_{\beta,R}:=\sum_{k\in\mathbb N^d}\frac{R^{|k|\beta}}{k!^{\beta}}\|\partial^kf\|_{C^0(\mathbb T^d)}<+\infty\right\}.$$
\end{defi}
    One can easily verify that
    $G_{\beta}(\mathbb T^d)=\cup_{R>0}G_{\beta,R}(\mathbb T^d)$.

\begin{defi}
    Given $f\in C^{\infty}(\mathbb T^d)$, for all $r\in\mathbb N$, define 
     \[
     \|f\|_{G_{\beta,R}^r}: =\sum_{j\le r}\|D^jf\|_{\beta,R}, \quad \text{ where }  \displaystyle\|D^rf\|_{\beta,R}:=\max_{|\gamma|_1=r}\|\partial^{\gamma}f\|_{\beta,R}.
     \]
    We define the following subspace $G_{\beta,R}^r(\mathbb T^d)$ of $G_{\beta,R}(\mathbb T^d)$ by
    $$G_{\beta,R}^r(\mathbb T^d):= \left\{f\in C^{\infty}(\mathbb T^d):\|f\|_{G_{\beta,R}^r}< +\infty \right\}.$$
\end{defi}
Note that  $G_{\beta,R}^r(\mathbb T^d)$ is  a Banach space equipped with the norm $\|\cdot\|_{G_{\beta,R}^r}$. Moreover, one can have the following properties.
\begin{lem}[Banach algebra property, \cite{03M.J-Pierre&S.David}]\label{Banana^-^}
    Assume $f,g\in G_{\beta,R}(\mathbb T^d)$, then we have $\|fg\|_{\beta,R}\le\|f\|_{\beta,R}\|g\|_{\beta,R}$.
\end{lem}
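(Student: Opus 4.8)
The plan is to reduce the claim to the Leibniz rule together with two elementary multi-index inequalities. First I would write, for any $k\in\mathbb N^d$, the Leibniz formula $\partial^k(fg)=\sum_{l\le k}\binom{k}{l}\,\partial^l f\,\partial^{k-l}g$, where the sum runs over multi-indices $l$ with $l_i\le k_i$ for every $i$. Taking $C^0(\mathbb T^d)$-norms and using the triangle inequality together with submultiplicativity of the sup norm gives $\|\partial^k(fg)\|_{C^0}\le\sum_{l\le k}\binom{k}{l}\|\partial^l f\|_{C^0}\|\partial^{k-l}g\|_{C^0}$. Substituting this into the definition of $\|\cdot\|_{\beta,R}$ produces a double sum over $k$ and $l\le k$ with weight $\frac{R^{|k|\beta}}{k!^{\beta}}\binom{k}{l}$; since all terms are nonnegative, Tonelli's theorem legitimizes any rearrangement below.

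The core of the argument is to split this weight as a product of an $l$-weight and a $(k-l)$-weight. Because $l\le k$ componentwise we have $|k|=|l|+|k-l|$, hence $R^{|k|\beta}=R^{|l|\beta}R^{|k-l|\beta}$ (this is where $R>0$ is used). For the factorials, $\binom{k}{l}=\frac{k!}{l!(k-l)!}$, so $\frac{\binom{k}{l}}{k!^{\beta}}=\frac{1}{l!\,(k-l)!\,k!^{\beta-1}}$, and the decisive inequality is
\[
\frac{1}{l!\,(k-l)!\,k!^{\beta-1}}\ \le\ \frac{1}{l!^{\beta}\,(k-l)!^{\beta}},
\]
equivalently $\bigl(l!\,(k-l)!\bigr)^{\beta-1}\le (k!)^{\beta-1}$. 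This holds because $l!\,(k-l)!\le k!$ componentwise (each $\binom{k_i}{l_i}\ge 1$), raised to the nonnegative power $\beta-1$ since $\beta\ge 1$. Combining the two observations bounds the weight by $\frac{R^{|l|\beta}}{l!^{\beta}}\cdot\frac{R^{|k-l|\beta}}{(k-l)!^{\beta}}$.

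Therefore $\|fg\|_{\beta,R}\le\sum_{k}\sum_{l\le k}\frac{R^{|l|\beta}}{l!^{\beta}}\|\partial^l f\|_{C^0}\cdot\frac{R^{|k-l|\beta}}{(k-l)!^{\beta}}\|\partial^{k-l}g\|_{C^0}$, and performing the change of summation index $m=k-l$ turns the constrained double sum into the unrestricted product of a sum over $l\in\mathbb N^d$ and a sum over $m\in\mathbb N^d$, which by definition is exactly $\|f\|_{\beta,R}\|g\|_{\beta,R}$. I do not expect a genuine obstacle; the only step deserving care is the combinatorial inequality $\frac{\binom{k}{l}}{k!^{\beta}}\le\frac{1}{l!^{\beta}(k-l)!^{\beta}}$, which is precisely what fails for the naive weighted norm $\|\cdot\|'_{\beta,B}$ and explains why the Marco--Sauzin normalization with the exponent $|k|\beta$ on $R$ is the right choice.
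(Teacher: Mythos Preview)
Your argument is correct and is exactly the standard Marco--Sauzin proof: Leibniz rule, the elementary bound $\binom{k}{l}/k!^{\beta}\le 1/(l!^{\beta}(k-l)!^{\beta})$ coming from $l!(k-l)!\le k!$ and $\beta\ge 1$, and a reindexing $m=k-l$. The paper itself does not supply a proof of this lemma but simply cites \cite{03M.J-Pierre&S.David}, so there is nothing further to compare.
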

\begin{lem}\cite{03M.J-Pierre&S.David}\label{Cauchyest}
    Assume $0<\lambda<R$, $\phi\in G_{\beta,R}(\mathbb T^d)$, then for all $\gamma\in\mathbb N^d$, we have $\partial^\gamma\phi\in G_{\beta,R-\lambda}(\mathbb T^d)$, and $$\sum_{\gamma\in\mathbb N^d}\frac{\lambda^{|\gamma|\beta}}{\gamma!^{\beta}}\|\partial^\gamma\phi\|_{\beta,R-\lambda}\le\|\phi\|_{\beta,R}.$$
    In particular, for all $r\in\mathbb N$, $$\sum_{\gamma\in\mathbb N^d,|\gamma|_1=r}\|\partial^\gamma\phi\|_{\beta,R-\lambda}\le \left(\frac{r!}{\lambda^{r}}\right)^\beta\|\phi\|_{\beta,R}.$$
\end{lem}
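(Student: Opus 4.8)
The plan is to expand both weighted norms as sums over $C^0$-norms of derivatives of $\phi$, reorganize the resulting double sum by grouping the terms that carry a common derivative $\partial^m\phi$, and then reduce everything to an elementary scalar combinatorial inequality whose proof is where the hypothesis $\beta\ge1$ is used.

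First I would write, for each $\gamma\in\mathbb N^d$,
$$\|\partial^\gamma\phi\|_{\beta,R-\lambda}=\sum_{k\in\mathbb N^d}\frac{(R-\lambda)^{|k|\beta}}{k!^\beta}\,\|\partial^{k+\gamma}\phi\|_{C^0(\mathbb T^d)}.$$
Multiplying by $\lambda^{|\gamma|\beta}/\gamma!^\beta$, summing over $\gamma$, and using Tonelli's theorem (every term is nonnegative) to interchange the two summations and substitute $m=k+\gamma$, I collect the coefficient of $\|\partial^m\phi\|_{C^0(\mathbb T^d)}$ and obtain
$$\sum_{\gamma\in\mathbb N^d}\frac{\lambda^{|\gamma|\beta}}{\gamma!^\beta}\,\|\partial^\gamma\phi\|_{\beta,R-\lambda}=\sum_{m\in\mathbb N^d}\|\partial^m\phi\|_{C^0(\mathbb T^d)}\sum_{\gamma\le m}\frac{\lambda^{|\gamma|\beta}(R-\lambda)^{|m-\gamma|\beta}}{\gamma!^\beta(m-\gamma)!^\beta},$$
the inner sum being over multi-indices $\gamma\le m$ componentwise. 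Since $\|\phi\|_{\beta,R}=\sum_m R^{|m|\beta}\|\partial^m\phi\|_{C^0(\mathbb T^d)}/m!^\beta$ and all coefficients are nonnegative, it suffices to establish the termwise bound, equivalently (after multiplying through by $m!^\beta$),
$$\sum_{\gamma\le m}\binom{m}{\gamma}^{\beta}\lambda^{|\gamma|\beta}(R-\lambda)^{|m-\gamma|\beta}\le R^{|m|\beta}\qquad\text{for every }m\in\mathbb N^d.$$

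Next I would note that this inequality factorizes over the $d$ coordinates: using $\binom{m}{\gamma}=\prod_i\binom{m_i}{\gamma_i}$, $\lambda^{|\gamma|\beta}=\prod_i\lambda^{\gamma_i\beta}$ and $(R-\lambda)^{|m-\gamma|\beta}=\prod_i(R-\lambda)^{(m_i-\gamma_i)\beta}$, both sides split into products over $i$, so it reduces to the one-dimensional statement $\sum_{j=0}^{n}\binom{n}{j}^{\beta}\lambda^{j\beta}(R-\lambda)^{(n-j)\beta}\le R^{n\beta}$ for $n\in\mathbb N$. Here $\beta\ge1$ enters decisively: the map $t\mapsto t^\beta$ is superadditive on $[0,\infty)$, so $\sum_j a_j^\beta\le\bigl(\sum_j a_j\bigr)^\beta$; applying this with $a_j=\binom{n}{j}\lambda^j(R-\lambda)^{n-j}\ge0$ (nonnegative because $0<\lambda<R$) and then the binomial theorem, $\sum_j a_j=(\lambda+(R-\lambda))^n=R^n$, yields exactly $R^{n\beta}$. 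This proves the first displayed inequality of the lemma; in particular each $\|\partial^\gamma\phi\|_{\beta,R-\lambda}$ is finite, so $\partial^\gamma\phi\in G_{\beta,R-\lambda}(\mathbb T^d)$, with the pointwise estimate $\|\partial^\gamma\phi\|_{\beta,R-\lambda}\le(\gamma!/\lambda^{|\gamma|})^\beta\|\phi\|_{\beta,R}$.

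For the ``in particular'' part I would restrict the sum in the first inequality to the multi-indices with $|\gamma|_1=r$, on which $\lambda^{|\gamma|\beta}=\lambda^{r\beta}$ is a common factor, and then use the multinomial-coefficient integrality fact $\gamma!=\prod_i\gamma_i!\le(\sum_i\gamma_i)!=r!$ to replace $1/\gamma!^\beta$ by the smaller $1/r!^\beta$; this gives $(\lambda^{r\beta}/r!^\beta)\sum_{|\gamma|_1=r}\|\partial^\gamma\phi\|_{\beta,R-\lambda}\le\|\phi\|_{\beta,R}$, which rearranges to the claimed bound $(r!/\lambda^r)^\beta\|\phi\|_{\beta,R}$. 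I expect the only genuinely delicate point to be the scalar combinatorial inequality — namely recognizing that superadditivity of $t\mapsto t^\beta$ for $\beta\ge1$ is precisely what is needed and that it dovetails with the binomial theorem; the multi-index manipulations (Tonelli rearrangement, coordinatewise factorization) are routine once nonnegativity of all terms is invoked.
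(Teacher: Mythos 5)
Your proof is correct. The paper does not actually prove this lemma --- it cites it from Marco--Sauzin \cite{03M.J-Pierre&S.David} --- so there is no in-paper argument to compare against; your chain (expand the inner $\|\cdot\|_{\beta,R-\lambda}$ norm, reindex with $m=k+\gamma$ by Tonelli, factorize coordinatewise, then use superadditivity of $t\mapsto t^\beta$ on $[0,\infty)$ for $\beta\ge1$ together with the binomial theorem, and finally $\gamma!\le r!$ on the slice $|\gamma|_1=r$) is precisely the standard argument behind this Cauchy--Gevrey estimate and correctly fills in the cited reference.
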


\begin{lem}\label{Interpolation ineq}
    Assume $f\in G_{\beta,R}(\mathbb T^d)$, then for all $R_1,R_2 >0$, such that $R^2=R_1R_2$, we have 
    $$\|f\|_{\beta,R}^2\le\|f\|_{\beta,R_1}\|f\|_{\beta,R_2}. $$
\end{lem}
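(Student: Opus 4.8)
The plan is to recognize that each of the three quantities in the statement is a weighted $\ell^1$-sum over $k\in\mathbb N^d$ of the \emph{same} nonnegative coefficients $a_k:=\|\partial^k f\|_{C^0(\mathbb T^d)}/k!^{\beta}$, the weights being $R^{|k|\beta}$, $R_1^{|k|\beta}$ and $R_2^{|k|\beta}$ respectively, i.e. $\|f\|_{\beta,\rho}=\sum_{k\in\mathbb N^d}\rho^{|k|\beta}a_k$ for $\rho\in\{R,R_1,R_2\}$. The hypothesis $R^2=R_1R_2$ says precisely that, for every multi-index $k$, the weight $R^{|k|\beta}$ is the geometric mean of $R_1^{|k|\beta}$ and $R_2^{|k|\beta}$; hence, term by term, $R^{|k|\beta}a_k=\bigl(R_1^{|k|\beta}a_k\bigr)^{1/2}\bigl(R_2^{|k|\beta}a_k\bigr)^{1/2}$.

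Summing this identity over $k$ and invoking the Cauchy--Schwarz inequality for nonnegative sequences then gives $\|f\|_{\beta,R}\le\|f\|_{\beta,R_1}^{1/2}\|f\|_{\beta,R_2}^{1/2}$, and squaring yields the asserted inequality. A minor point to address is convergence: if one of $R_1,R_2$ exceeds $R$, the corresponding norm on the right-hand side may a priori be $+\infty$, in which case the inequality holds trivially; when both are finite (in particular whenever $R_1,R_2\le R$, by monotonicity of the weights $\rho^{|k|\beta}$ in $\rho$) the Cauchy--Schwarz step is applied to genuinely summable series and is entirely rigorous. There is no substantive obstacle here; the only thing to get right is the geometric-mean splitting of the weights $R^{|k|\beta}$, after which the estimate is immediate.

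It is worth noting that this is the Gevrey-norm analogue of the classical log-convexity of analytic norms in the width of the complex strip, a tool routinely used in analytic KAM schemes to interpolate between domain-loss estimates; the same role is expected of Lemma \ref{Interpolation ineq} when balancing the successive shrinkings of the Gevrey radius $R$ along the quasi-Newton iteration.
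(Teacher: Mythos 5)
Your proof is correct and is essentially the paper's proof: both split the weight $R^{|k|\beta}=\bigl(R_1^{|k|\beta}\bigr)^{1/2}\bigl(R_2^{|k|\beta}\bigr)^{1/2}$ using $R^2=R_1R_2$ and then apply Cauchy--Schwarz to the series defining the norms. The only difference is that you spell out the geometric-mean factorization and the convergence caveat explicitly, while the paper leaves them implicit.
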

\begin{proof}
    By Cauchy-Schwarz inequality, we have
    \begin{align*}
        \|f\|_{\beta,R}^2&=\left(\sum_{k\in\mathbb N^d}\frac{R^{|k|\beta}}{k!^{\beta}}\|\partial^kf\|_{C^0(\mathbb T^d)}\right)^2\\
        &\le \left(\sum_{k\in\mathbb N^d}\frac{R_1^{|k|\beta}}{k!^{\beta}}\|\partial^kf\|_{C^0(\mathbb T^d)}\right) \left(\sum_{k\in\mathbb N^d}\frac{R_2^{|k|\beta}}{k!^{\beta}}\|\partial^kf\|_{C^0(\mathbb T^d)}\right)\\
        &=\|f\|_{\beta,R_1}\|f\|_{\beta,R_2}.\qedhere
    \end{align*}
\end{proof}

Now, we can define the Gevrey-$(\beta,R)$ quasi-periodic function $f:\mathbb R\to\mathbb R$. 

\begin{defi}\label{rational independent}
$\alpha=(\alpha_1,\cdots,\alpha_d)\in\mathbb R^{d}$ is said to be rationally independent if for all $ (k_1,\cdots,k_d)\in\mathbb Z^d\setminus\{0\} $ we have $\alpha_{1}k_1+\cdots+\alpha_dk_d\ne 0$.  
\end{defi}

\begin{defi}
    The function $f:\mathbb R\to\mathbb R$ is said to be a Gevrey-$(\beta,R)$ quasi-periodic function with basic frequency $\alpha\in\mathbb R^{d}$ and shell function $F: \mathbb T^{d} \to \mathbb R$, if $f(\theta)=F(\alpha\theta)$, $\alpha$ is rationally independent, $F\in G_{\beta,R}(\mathbb T^d)$. And we define the $\beta,R$-norm of $f$ as $\|f\|_{\beta,R}:=\|F\|_{\beta,R}$. We denote by $QP_{\beta,R}(\alpha)$ the set of Gevrey-$(\beta,R)$ quasi-periodic functions with the frequency $\alpha$.
\end{defi}

Since our analysis is restricted to the torus $\mathbb T^d$, it is more convenient to work with the norm defined in terms of Fourier series.
\begin{defi}\cite{17Lopes&Joao}
    Let $\beta\ge1$, $R>0$, we define the function space
    $$\mathcal F_{\beta,R}(\mathbb T^d):= \left\{f(\sigma)=\sum_{k\in\mathbb Z^d}f_ke^{ik\cdot\sigma}\in C^{\infty}(\mathbb T^d):\|f\|_{\mathcal F_{\beta,R}}:=\sum_{k\in\mathbb Z^d}e^{\beta R|k|_{\beta}}| f_k|<+\infty \right\}.$$
\end{defi}
\begin{rmk}
    The space $\mathcal{F}_{\beta,R}(\mathbb T^d)$ endowed with the norm $\|\cdot\|_{\mathcal{F}_{\beta,R}}$ is also a Banach space. Moreover, the two norms $\|\cdot\|_{\beta,R}$, $\|\cdot\|_{\mathcal{F}_{\beta,R}}$ satisfy the following relationship.
\end{rmk}

\begin{defi}
    We denote by $\langle h \rangle$ the average of $h\in C^{\infty}(\mathbb T^d)$,  $$\langle h\rangle:=\frac{1}{(2\pi)^d}\int_{\mathbb T^d}h(\sigma)d\sigma.$$
    And we further define the subspace of $G_{\beta,R_0}(\mathbb T^d)$, $\mathcal{ F}_{\beta,R_0}(\mathbb T^d)$:
    $$\overset{\circ}{G}_{\beta,R}(\mathbb T^d)=\{f\in G_{\beta,R}(\mathbb T^d):\langle f\rangle=0\},$$
    $$\overset{\circ}{\mathcal F}_{\beta,R}(\mathbb T^d)=\{f\in \mathcal F_{\beta,R}(\mathbb T^d):\langle f\rangle=0\}.$$
\end{defi}

\begin{lem}\cite{24You&Yuan}\label{2norm}
    Assume $f\in C^{\infty}(\mathbb T^d)$, then
    \begin{itemize}
    \item [{(1)}] $\|f\|_{\beta,R}\le\|f\|_{\mathcal F_{\beta,R}}$,
    \item [(2)] $\|f\|_{\mathcal F_{\beta,(1-\varepsilon)R-\epsilon}}\le C_{\beta,d}\varepsilon^{-(\beta-1)d}\epsilon^{-\beta d}\|f\|_{\beta,R}$.
    \end{itemize}
\end{lem}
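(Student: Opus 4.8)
The plan is to reduce both estimates to two-sided bounds on the scalar sum $S_\beta(x):=\sum_{n\ge 0}\big(x^n/n!\big)^\beta$ ($x\ge 0$, $\beta\ge 1$), which enters because the weight $R^{|k|\beta}/k!^\beta$ defining $\|\cdot\|_{\beta,R}$ factorizes over the coordinates of the multi-index $k$. For (1), I would expand $f=\sum_{m\in\mathbb Z^d}f_m e^{im\cdot\sigma}$, differentiate term by term to get $\|\partial^\gamma f\|_{C^0}\le\sum_m|f_m|\prod_j|m_j|^{\gamma_j}$, substitute into the definition of $\|f\|_{\beta,R}$, and interchange the (absolutely convergent) sums; this gives $\|f\|_{\beta,R}\le\sum_m|f_m|\prod_j S_\beta\big(R|m_j|^{1/\beta}\big)$. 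It then suffices to invoke the elementary upper bound $S_\beta(x)\le\big(\sum_n x^n/n!\big)^\beta=e^{\beta x}$ --- i.e. $\|a\|_{\ell^\beta}\le\|a\|_{\ell^1}$ for $\beta\ge1$ and $a\ge0$, raised to the power $\beta$ --- together with $\prod_j e^{\beta R|m_j|^{1/\beta}}=e^{\beta R|m|_\beta}$, which is precisely $\|f\|_{\beta,R}\le\|f\|_{\mathcal F_{\beta,R}}$.

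For (2), the starting point is the Fourier-side Cauchy estimate: the $k$-th coefficient of $\partial^\gamma f$ equals $(ik)^\gamma f_k$, so $\prod_j|k_j|^{\gamma_j}\,|f_k|\le\|\partial^\gamma f\|_{C^0}$. Multiplying by $R^{|\gamma|\beta}/\gamma!^\beta$ and summing over all $\gamma\in\mathbb N^d$ gives $|f_k|\prod_j S_\beta\big(R|k_j|^{1/\beta}\big)\le\|f\|_{\beta,R}$, hence the pointwise bound $|f_k|\le\|f\|_{\beta,R}\big/\prod_j S_\beta\big(R|k_j|^{1/\beta}\big)$. The crux is a matching lower bound $S_\beta(x)\ge c_\beta\,e^{\beta x}(1+x)^{-(\beta-1)}$ with $c_\beta>0$ depending only on $\beta$: for bounded $x$ it is immediate from $S_\beta(x)\ge 1$, while for large $x$ I would retain only the indices $n\le 2x$ and combine the power-mean inequality $\sum_{n\le 2x}b_n^\beta\ge(2x+1)^{1-\beta}\big(\sum_{n\le 2x}b_n\big)^\beta$ (with $b_n=x^n/n!$, valid since $\beta\ge1$) with the Markov bound $\sum_{n\le 2x}x^n/n!\ge\tfrac12 e^x$ (Markov's inequality for the Poisson law of mean $x$). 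Feeding this back yields $|f_k|\le c_\beta^{-d}\,\|f\|_{\beta,R}\,e^{-\beta R|k|_\beta}\prod_{j}\big(1+R|k_j|^{1/\beta}\big)^{\beta-1}$.

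It remains to sum this against the Fourier weight with $R'=(1-\varepsilon)R-\epsilon$, so that $R-R'=\varepsilon R+\epsilon$. Because $|k|_\beta=\sum_j|k_j|^{1/\beta}$, the sum over $k\in\mathbb Z^d$ factorizes into the $d$-th power of the scalar sum $\sum_{n\in\mathbb Z}e^{-\beta(\varepsilon R+\epsilon)|n|^{1/\beta}}\big(1+R|n|^{1/\beta}\big)^{\beta-1}$; comparing this with the corresponding integral (the integrand is unimodal) and using the substitution $s=\beta(\varepsilon R+\epsilon)t^{1/\beta}$ bounds it, up to a harmless bounded term, by $C_{\beta}R^{\beta-1}(\varepsilon R+\epsilon)^{-(2\beta-1)}$. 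The stated exponents then fall out of the split $(\varepsilon R+\epsilon)^{-(2\beta-1)}\le\big((\varepsilon R)^{\theta}\epsilon^{1-\theta}\big)^{-(2\beta-1)}$ with $\theta=\frac{\beta-1}{2\beta-1}$, so that $\theta(2\beta-1)=\beta-1$, $(1-\theta)(2\beta-1)=\beta$, and the power $R^{\beta-1}$ cancels against $(\varepsilon R)^{-\theta(2\beta-1)}=R^{-(\beta-1)}\varepsilon^{-(\beta-1)}$; raising to the $d$-th power and collecting constants produces $C_{\beta,d}\,\varepsilon^{-(\beta-1)d}\epsilon^{-\beta d}$. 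I expect the one genuinely delicate point to be the lower bound on $S_\beta$ --- for $\beta$ close to $1$ the single largest term of the series does not suffice, so the concentration/power-mean argument really is needed, and one must keep track that its constant depends on $\beta$ alone; all the remaining steps are routine manipulations of absolutely convergent series.
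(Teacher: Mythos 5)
The paper does not prove this lemma; it is cited from the reference \cite{24You&Yuan}, so there is no in-paper proof to compare against. Your argument is, as far as I can tell, a correct self-contained derivation of both parts, and its structure (reduce to the one-variable generating function $S_\beta(x)=\sum_{n\ge0}(x^n/n!)^\beta$, then get an upper and a matching lower bound on $S_\beta$) is the natural one for these weighted norms, since the weight $R^{|\gamma|\beta}/\gamma!^\beta$ factorizes over coordinates.

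A few points worth writing out if you include this in the text. First, the factorization $\sum_{\gamma\in\mathbb N^d}\frac{R^{|\gamma|\beta}}{\gamma!^\beta}\prod_j|m_j|^{\gamma_j}=\prod_jS_\beta(R|m_j|^{1/\beta})$ and the identity $\prod_je^{\beta R|m_j|^{1/\beta}}=e^{\beta R|m|_\beta}$ are both exact for this paper's definition $|m|_\beta=\sum_j|m_j|^{1/\beta}$, so parts (1) and (2) really do reduce to two-sided control of $S_\beta$; good. Second, your lower bound $S_\beta(x)\ge c_\beta e^{\beta x}(1+x)^{-(\beta-1)}$ is the genuine content, and your proof of it (truncate at $n\le 2x$, Jensen/power-mean to pull $\beta$ outside the sum at the cost of $(2x+1)^{1-\beta}$, Markov for the Poisson tail to keep half of $e^x$) is correct and quantitatively clean; in particular you are right that retaining only the single largest term $n\approx x$ would lose a factor $\sim e^{-\beta x}e^{\beta x}x^{-\beta/2}$-type asymptotics that does \emph{not} give the $(1+x)^{-(\beta-1)}$ rate, so the concentration argument is needed. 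Third, in the final sum-versus-integral step the integrand $e^{-\beta as}(1+Rs)^{\beta-1}s^{\beta-1}$ (after the substitution $s=t^{1/\beta}$) is unimodal but not monotone; the usual comparison then costs an additional term of order $\max_s$, which you wave away as ``harmless.'' It is harmless, but you should note why: the maximum is of order $R^{\beta-1}a^{-(\beta-1)}$ with $a=\varepsilon R+\epsilon$, which is dominated by the integral's $R^{\beta-1}a^{-(2\beta-1)}$ in the regime $a\lesssim1$ that the paper actually uses. Finally, your Young-inequality split $\varepsilon R+\epsilon\ge(\varepsilon R)^\theta\epsilon^{1-\theta}$ with $\theta=\frac{\beta-1}{2\beta-1}$ indeed makes the spurious $R^{\beta-1}$ cancel and reproduces exactly the exponents $\varepsilon^{-(\beta-1)d}\epsilon^{-\beta d}$; and the degenerate case $\beta=1$ also checks out since $(1+x)^{\beta-1}\equiv1$ and the scalar sum is $O(a^{-1})\le O(\epsilon^{-1})$.
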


\begin{lem}[Banach algebra property]
    Let $f,g\in \mathcal{F}_{\beta,R}(\mathbb T^d)$, we have $\|fg\|_{\mathcal{F}_{\beta,R}}\le\|f\|_{\mathcal F_{\beta,R}}\|g\|_{\mathcal F_{\beta, R}}.$
\end{lem}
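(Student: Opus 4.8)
The statement to prove is the Banach algebra property for $\mathcal F_{\beta,R}(\mathbb T^d)$: if $f,g\in\mathcal F_{\beta,R}(\mathbb T^d)$ then $\|fg\|_{\mathcal F_{\beta,R}}\le\|f\|_{\mathcal F_{\beta,R}}\|g\|_{\mathcal F_{\beta,R}}$.

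The plan is to work directly with the Fourier coefficients and reduce the whole thing to one elementary inequality on the exponential weights, namely subadditivity of $k\mapsto|k|_\beta$. Write $f(\sigma)=\sum_{k}f_ke^{ik\cdot\sigma}$ and $g(\sigma)=\sum_{k}g_ke^{ik\cdot\sigma}$. Then the product has Fourier coefficients given by the convolution $(fg)_k=\sum_{m\in\mathbb Z^d}f_m g_{k-m}$. Hence by the triangle inequality in $\ell^1$ of the coefficients,
\[
\|fg\|_{\mathcal F_{\beta,R}}=\sum_{k\in\mathbb Z^d}e^{\beta R|k|_\beta}\Big|\sum_{m\in\mathbb Z^d}f_m g_{k-m}\Big|\le\sum_{k\in\mathbb Z^d}\sum_{m\in\mathbb Z^d}e^{\beta R|k|_\beta}|f_m|\,|g_{k-m}|.
\]
Everything now hinges on comparing $e^{\beta R|k|_\beta}$ with $e^{\beta R|m|_\beta}e^{\beta R|k-m|_\beta}$.

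The key lemma I would isolate (and prove first) is the subadditivity $|k|_\beta\le|m|_\beta+|k-m|_\beta$ for all $k,m\in\mathbb Z^d$ and $\beta\ge1$. This follows coordinatewise from the scalar inequality $|a+b|^{1/\beta}\le|a|^{1/\beta}+|b|^{1/\beta}$ for real $a,b$, which is just concavity/subadditivity of $t\mapsto t^{1/\beta}$ on $[0,\infty)$ together with $|a+b|\le|a|+|b|$. Summing over the $d$ coordinates with $a=m_j$, $b=k_j-m_j$ gives $|k|_\beta\le|m|_\beta+|k-m|_\beta$. Since $\beta R\ge0$, exponentiating yields $e^{\beta R|k|_\beta}\le e^{\beta R|m|_\beta}\,e^{\beta R|k-m|_\beta}$.

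Plugging this bound into the double sum and then changing the summation variable from $k$ to $n=k-m$ factorizes the double series:
\[
\|fg\|_{\mathcal F_{\beta,R}}\le\sum_{m\in\mathbb Z^d}\sum_{n\in\mathbb Z^d}e^{\beta R|m|_\beta}|f_m|\;e^{\beta R|n|_\beta}|g_n|=\Big(\sum_{m\in\mathbb Z^d}e^{\beta R|m|_\beta}|f_m|\Big)\Big(\sum_{n\in\mathbb Z^d}e^{\beta R|n|_\beta}|g_n|\Big)=\|f\|_{\mathcal F_{\beta,R}}\|g\|_{\mathcal F_{\beta,R}},
\]
where the interchange/reindexing of the non-negative double series is justified by Tonelli. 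This also shows the product lies in $\mathcal F_{\beta,R}(\mathbb T^d)$. I do not anticipate a genuine obstacle here: the only nontrivial ingredient is the scalar subadditivity of $t^{1/\beta}$, and the rest is bookkeeping with absolutely convergent series; one should just be slightly careful to state that all rearrangements are of series with non-negative terms so no conditional-convergence issues arise.
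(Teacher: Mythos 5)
Your proof is correct and is essentially the same argument as in the paper: expand the product via convolution of Fourier coefficients, apply the triangle inequality, use subadditivity $|p+q|_\beta\le|p|_\beta+|q|_\beta$ (which the paper invokes implicitly in the step $e^{\beta R|p+q|_\beta}\le e^{\beta R|p|_\beta}e^{\beta R|q|_\beta}$), and reindex to factorize the double sum. You merely spell out the scalar subadditivity of $t\mapsto t^{1/\beta}$ and the Tonelli justification more explicitly than the paper does.
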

\begin{proof}
    Note that $\displaystyle fg(\sigma)=(\sum_{p\in\mathbb Z^d}f_pe^{ip\cdot\sigma})(\sum_{q\in\mathbb Z^d}g_qe^{iq\cdot\sigma})=\sum_{k\in\mathbb Z^d}\sum_{l\in\mathbb Z^d}f_lg_{k-l}e^{ik\cdot\sigma}$,
    then 
    \begin{align*}
        \|fg\|_{\mathcal{F}_{\beta,R}}&=\sum_{k\in\mathbb Z^d}e^{\beta R|k|_{\beta}}|\sum_{l\in\mathbb Z^d}f_lg_{k-l}|\le\sum_{k\in\mathbb Z^d}\sum_{l\in\mathbb Z^d}e^{\beta R|k|_{\beta}}|f_l||g_{k-l}|\\
        &=\sum_{p\in\mathbb Z^d}\sum_{q\in\mathbb Z^d}e^{\beta R|p+q|_{\beta}}|f_p||g_q|\le\sum_{p\in\mathbb Z^d}\sum_{q\in\mathbb Z^d}e^{\beta R|p|_{\beta}}e^{\beta R|q|_{\beta}}|f_p||g_q|\\
        &=\|f\|_{\mathcal F_{\beta,R}}\|g\|_{\mathcal F_{\beta,R}}.\qedhere
    \end{align*}
\end{proof}

At the end of this section, we present a classical fact on quasi-periodic functions.
\begin{lem}\label{quasilem}
    Let $\alpha\in\mathbb R^d$ be rationally independent. Assume $f:\mathbb R\to\mathbb R$, $f(\theta)=\sum_{k\in\mathbb Z^d}\hat f_ke^{ik\cdot\alpha\theta}$ with $\sum_{k\in\mathbb Z^d}|\hat h_k|<+\infty$. Then, $$\hat f_0=\lim_{T\to\infty}\frac{1}{2T}\int_{-T}^T f(\theta)d\theta.$$
\end{lem}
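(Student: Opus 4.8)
The statement is the classical fact that the zeroth Fourier coefficient of a uniformly convergent quasi-periodic function equals its time average. My plan is to verify this by a direct computation, exchanging the sum and the integral (justified by the absolute summability hypothesis) and then evaluating the time average of each exponential mode separately.

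First I would write, for fixed $T>0$,
\[
\frac{1}{2T}\int_{-T}^{T} f(\theta)\,d\theta
= \frac{1}{2T}\int_{-T}^{T}\sum_{k\in\mathbb Z^d}\hat f_k e^{ik\cdot\alpha\theta}\,d\theta
= \sum_{k\in\mathbb Z^d}\hat f_k\cdot\frac{1}{2T}\int_{-T}^{T} e^{ik\cdot\alpha\theta}\,d\theta,
\]
where the interchange of summation and integration is legitimate since $\sum_k|\hat f_k|<+\infty$ and $|e^{ik\cdot\alpha\theta}|=1$, so the series converges uniformly in $\theta$ on the compact interval $[-T,T]$. Next I would compute the inner integral: for $k=0$ it equals $1$, while for $k\neq 0$, the rational independence of $\alpha$ (Definition \ref{rational independent}) guarantees $k\cdot\alpha\neq 0$, hence
\[
\frac{1}{2T}\int_{-T}^{T} e^{ik\cdot\alpha\theta}\,d\theta
= \frac{\sin(k\cdot\alpha\,T)}{k\cdot\alpha\,T},
\]
whose absolute value is bounded by $1/(|k\cdot\alpha|\,T)$ and in particular tends to $0$ as $T\to\infty$.

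Finally I would pass to the limit $T\to\infty$ inside the sum. Since each term is bounded by $|\hat f_k|$ uniformly in $T$ and $\sum_k|\hat f_k|<+\infty$, the dominated convergence theorem (for series) applies, giving
\[
\lim_{T\to\infty}\frac{1}{2T}\int_{-T}^{T} f(\theta)\,d\theta
= \sum_{k\in\mathbb Z^d}\hat f_k\cdot\lim_{T\to\infty}\frac{1}{2T}\int_{-T}^{T} e^{ik\cdot\alpha\theta}\,d\theta
= \hat f_0,
\]
since only the $k=0$ term survives. The only genuinely substantive point — and thus the part to state carefully rather than the routine estimates — is the use of rational independence to ensure $k\cdot\alpha\neq 0$ for all $k\neq 0$, which is precisely what forces the off-diagonal modes to average out; everything else is a standard application of dominated convergence.
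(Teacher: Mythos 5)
Your proof is correct and complete. The paper states this lemma as ``a classical fact'' without supplying a proof, so there is nothing to compare against; your argument — interchanging the sum and integral by absolute summability, evaluating each mode to get $\frac{\sin(k\cdot\alpha T)}{k\cdot\alpha T}$ for $k\neq 0$ (which requires exactly the rational independence of $\alpha$ to ensure $k\cdot\alpha\neq 0$), and passing to the limit via dominated convergence for series — is the standard one and entirely sound. One tiny remark: the hypothesis in the paper's statement reads $\sum_k|\hat h_k|<+\infty$, which is evidently a typo for $\sum_k|\hat f_k|<+\infty$; you read it the intended way.
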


\subsection{Diophantine properties}
We review some fundamental results about Diophantine properties in the quasi-periodic setting.

The Diophantine vector is $\alpha\in\mathbb R^{d}$ such that
\begin{equation}\label{dio1}
|\alpha\cdot k|\ge\nu_0|k|^{-\tau_0},\,\,\,\forall k\in \mathbb Z^d\setminus\{0\}.
\end{equation} 
We are interested in the numbers $\omega\in\mathbb R$  such that
\begin{equation}\label{dio2}
|\omega\alpha\cdot k-2n\pi|\ge\nu|k|^{-\tau},\,\,\,\forall k\in\mathbb Z^d\setminus\{0\}.
\end{equation}
where $\alpha$ is a Diophantine vector, $\nu_0, \tau_0, \nu,\tau$ are positive numbers.

We denote by $\overline{\mathscr D}(\nu_0, \tau_0)$ the set of $\alpha\in \mathbb R^{d}$ satisfying \eqref{dio1}.
We denote by $\mathscr D(\nu,\tau;\alpha)$ the set of $\omega$ satisfying \eqref{dio2}  and $\mathscr D(\tau;\alpha)=\cup_{\nu>0}\mathscr D(\nu,\tau;\alpha)$.

The following lemmata are classical.

\begin{lem}\cite{09Po}
    For $\nu_0>0$, $\tau_0>d-1$, then $\overline{\mathscr D}(\nu_0, \tau_0)$ is of full Lebesgue measure. Conversely, for $\nu_0>0$, $\tau_0<d-1$, then $\overline{\mathscr D}(\nu_0, \tau_0)=\varnothing$.
\end{lem}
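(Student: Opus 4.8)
The plan is to prove the two halves separately by the classical ``resonant slab'' estimate, localising to balls and then letting the Diophantine constant shrink. The guiding principle is that the set where \eqref{dio1} fails for a \emph{fixed} $k$ is a thin slab whose measure I can sum over $k$, and convergence of that sum is exactly what the hypothesis $\tau_0>d-1$ buys.

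\textbf{Full measure.} Fix $N\ge1$ and work inside the ball $B_N\subset\mathbb R^d$. For $k\in\mathbb Z^d\setminus\{0\}$ the resonant set $\mathcal R_k(\nu_0):=\{\alpha\in B_N:|\alpha\cdot k|<\nu_0|k|^{-\tau_0}\}$ lies between the hyperplanes $\{\alpha\cdot k=\pm\nu_0|k|^{-\tau_0}\}$, which are at Euclidean distance $2\nu_0|k|^{-\tau_0}/|k|$ apart, so $\mathrm{Leb}\,\mathcal R_k(\nu_0)\le C_dN^{d-1}\nu_0|k|^{-\tau_0-1}$. Summing,
\[
\mathrm{Leb}\big(B_N\setminus\overline{\mathscr D}(\nu_0,\tau_0)\big)\le\sum_{k\ne0}\mathrm{Leb}\,\mathcal R_k(\nu_0)\le C_dN^{d-1}\nu_0\sum_{k\ne0}|k|^{-\tau_0-1},
\]
and since $\sum_{k\ne0}|k|^{-\tau_0-1}\asymp\sum_{m\ge1}m^{d-2-\tau_0}$ converges precisely when $\tau_0>d-1$, the right-hand side is $\le C_{d,\tau_0}N^{d-1}\nu_0\to0$ as $\nu_0\to0^+$. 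Hence $B_N\cap\bigcup_{\nu_0>0}\overline{\mathscr D}(\nu_0,\tau_0)$ is of full measure in $B_N$ for every $N$, and letting $N\to\infty$ gives the assertion (``full measure'' being understood, as is customary, for the union over all $\nu_0>0$).

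\textbf{Emptiness.} Assume $\tau_0<d-1$ and let $\alpha\in\mathbb R^d$ be arbitrary; I must exhibit $k\ne0$ violating \eqref{dio1}. If $\alpha=0$ this is trivial, so take $\alpha\ne0$. Minkowski's convex body theorem, applied to the symmetric convex set $\{x\in\mathbb R^d:|x|\le T,\ |\alpha\cdot x|\le c_d|\alpha|T^{-(d-1)}\}$ (whose volume exceeds $2^d$ for a suitable dimensional constant $c_d$), yields for each $T\ge1$ a lattice point $k\in\mathbb Z^d\setminus\{0\}$ with $|k|\le T$ and $|\alpha\cdot k|\le c_d|\alpha|T^{-(d-1)}$. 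Then
\[
\frac{|\alpha\cdot k|}{\nu_0|k|^{-\tau_0}}\le\frac{c_d|\alpha|}{\nu_0}\,T^{\tau_0-(d-1)}\xrightarrow[T\to\infty]{}0,
\]
so choosing $T$ large forces $|\alpha\cdot k|<\nu_0|k|^{-\tau_0}$, i.e. $\alpha\notin\overline{\mathscr D}(\nu_0,\tau_0)$; as $\alpha$ was arbitrary, $\overline{\mathscr D}(\nu_0,\tau_0)=\varnothing$.

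I do not anticipate any real obstacle: the argument is entirely standard and could in fact be replaced by a citation to \cite{09Po}. The one step to get right is the exponent $-\tau_0-1$ in the per-$k$ slab bound — one factor $|k|^{-\tau_0}$ from the width of the band in the $\alpha\cdot k$ variable and an additional $|k|^{-1}$ from converting to Euclidean thickness — since it is precisely this that places the convergence threshold of $\sum_k|k|^{-\tau_0-1}$ at $\tau_0=d-1$ and makes the two halves match.
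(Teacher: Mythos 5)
The paper does not prove this lemma—it is cited from \cite{09Po}—so there is no in-paper argument to compare against. Your proof is correct and is the standard one. For the full-measure half, the key bookkeeping is exactly as you flag it: the resonant slab $\{\alpha:|\alpha\cdot k|<\nu_0|k|^{-\tau_0}\}$ has Euclidean half-thickness $\nu_0|k|^{-\tau_0}/|k|$, giving the per-$k$ bound $\lesssim N^{d-1}\nu_0|k|^{-\tau_0-1}$ on $B_N$, and $\sum_{k\ne0}|k|^{-\tau_0-1}<\infty$ precisely when $\tau_0>d-1$; taking $\nu_0\to0^+$ and then $N\to\infty$ gives full measure for $\bigcup_{\nu_0>0}\overline{\mathscr D}(\nu_0,\tau_0)$, which is indeed the customary reading of the lemma's slightly compressed phrasing. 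For the emptiness half, the Minkowski argument is sound: the symmetric convex body $\{|x|\le T,\ |\alpha\cdot x|\le c_d|\alpha|T^{-(d-1)}\}$ does have volume uniformly bounded below in $T$ once $T$ is large (thickness $\sim T^{-(d-1)}$ against cross-section $\sim T^{d-1}$), so for a suitable $c_d$ Minkowski produces $k\ne0$ with $|k|\le T$ and $|\alpha\cdot k|\le c_d|\alpha|T^{-(d-1)}$, whence $|k|^{\tau_0}|\alpha\cdot k|\lesssim T^{\tau_0-(d-1)}\to0$; one only needs this for $T$ large, so the phrase ``for each $T\ge1$'' is mildly overclaimed but harmless. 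A one-dimensional Dirichlet pigeonhole on $\alpha\cdot k$ would give the same conclusion with less geometry, but your version is perfectly valid.
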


\begin{lem}\cite{12Su&delaLlave}
    If $\alpha\in\overline{\mathscr D}(\nu_0, \tau_0)$, and $\tau>d+\tau_0$, then $\mathscr D(\tau;\alpha)$ is of full Lebesgue measure.
\end{lem}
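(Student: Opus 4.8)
The plan is to show that the complement $\mathbb R\setminus\mathscr D(\tau;\alpha)$ is Lebesgue-null by the classical resonant-interval counting argument, the only twist being that we must extract the sharp exponent $\tau>d+\tau_0$ rather than the cruder $\tau>d+\tau_0+1$. Since $\mathscr D(\tau;\alpha)=\bigcup_{m\ge 1}\mathscr D(1/m,\tau;\alpha)$ is an increasing union, it suffices to prove that for every fixed $M>0$ one has $\operatorname{meas}\bigl([-M,M]\setminus\mathscr D(\nu,\tau;\alpha)\bigr)\to 0$ as $\nu\to 0^+$: continuity of Lebesgue measure along the nested family then gives $\operatorname{meas}\bigl([-M,M]\setminus\mathscr D(\tau;\alpha)\bigr)=0$, and letting $M\to\infty$ concludes.

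Fix $M>0$ and $\nu>0$ and write the resonant set as a union over $k\in\mathbb Z^d\setminus\{0\}$ and $n\in\mathbb Z$ of the ``bad intervals''
$R_{k,n}:=\{\omega\in[-M,M]:|\omega\,\alpha\cdot k-2n\pi|<\nu|k|^{-\tau}\}$.
Since $\alpha\in\overline{\mathscr D}(\nu_0,\tau_0)$ is in particular rationally independent, $\alpha\cdot k\ne 0$, so $R_{k,n}$ is a genuine interval of length at most $2\nu|k|^{-\tau}/|\alpha\cdot k|$. The key observation is the counting estimate: as $\omega$ runs over $[-M,M]$ the quantity $\omega\,\alpha\cdot k$ sweeps an interval of length $2M|\alpha\cdot k|$, hence only $\le \tfrac{M|\alpha\cdot k|}{\pi}+2$ values of $n$ can produce a nonempty $R_{k,n}$. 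Multiplying the two bounds, the factor $|\alpha\cdot k|$ cancels and we get
\[
\operatorname{meas}\Bigl(\bigcup_{n\in\mathbb Z}R_{k,n}\Bigr)\le \frac{2M\nu}{\pi}\,|k|^{-\tau}+\frac{4\nu}{|\alpha\cdot k|}\,|k|^{-\tau}\le \frac{2M\nu}{\pi}\,|k|^{-\tau}+\frac{4\nu}{\nu_0}\,|k|^{\tau_0-\tau},
\]
where the last inequality uses the Diophantine bound \eqref{dio1} on $\alpha$.

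Summing over $k\ne 0$, the first series converges because $\tau>d+\tau_0>d$ and the second because $\tau-\tau_0>d$; therefore
\[
\operatorname{meas}\bigl([-M,M]\setminus\mathscr D(\nu,\tau;\alpha)\bigr)\le \nu\cdot C(M,d,\nu_0,\tau_0,\tau)\xrightarrow[\nu\to0^+]{}0,
\]
which is exactly what was needed. The main subtlety — and the only place any real care is required — is the counting step: a naive estimate using only $|\alpha\cdot k|\le|\alpha|\,|k|$ would cost an extra factor $|k|$ in each term and force the stronger hypothesis $\tau>d+\tau_0+1$. The sharp threshold $\tau>d+\tau_0$ is recovered precisely because the number of ``wraps'' of $\omega\mapsto\omega\,\alpha\cdot k$ over $[-M,M]$, which is comparable to $|\alpha\cdot k|$, exactly cancels the $|\alpha\cdot k|^{-1}$ in the length of each bad interval; the Diophantine property of $\alpha$ is then only needed to control the $O(1)$ boundary term. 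The measurability bookkeeping and the two limits $\nu\to0$, $M\to\infty$ are routine applications of continuity of measure.
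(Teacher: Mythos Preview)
Your argument is correct. The paper does not supply its own proof of this lemma; it is simply quoted from \cite{12Su&delaLlave}. Your resonant-interval counting argument is the standard one, and you identify the right mechanism for obtaining the sharp threshold $\tau>d+\tau_0$: the number of relevant integers $n$ for fixed $k$ is $O(|\alpha\cdot k|)$, which cancels the $|\alpha\cdot k|^{-1}$ in the length of each bad interval, so that the Diophantine lower bound on $|\alpha\cdot k|$ is only needed to handle the additive $O(1)$ term in the count. The measure-theoretic reductions (nested union in $\nu$, exhaustion in $M$) are routine and correctly stated.
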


\subsection{Homological equations}
\begin{lem}\label{homolem}
    Let $\omega\in\mathbb R$, $\alpha\in[0,1]^d$. Assume that $|\omega\alpha\cdot k-2n\pi|\ge\nu|k|^{-\tau}$ for all $k\in \mathbb Z^d\setminus\{0\}$, $n\in\mathbb Z$. If $\eta\in\mathcal F_{\beta,R}$, $\displaystyle\int_{\mathbb T^d}\eta(\sigma) d\sigma=0$, then for all $R'<R$, the equation $$\phi(\sigma+\omega\alpha)-\phi(\sigma)=\eta(\sigma)$$ has the unique solution $\phi$ with $\displaystyle\int_{\mathbb T^d}\phi(\sigma)d\sigma=0$, which satisfies 
    \begin{equation*}
        \|\phi\|_{\mathcal{F}_{\beta,R'}}\le C(\tau,\beta)\nu^{-1}(R-R')^{-{\tau}{\beta}}\|\eta\|_{\mathcal{F}_{\beta,R}}.
    \end{equation*}
\end{lem}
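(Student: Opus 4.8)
The plan is to solve the equation in Fourier coefficients and reduce the whole statement to a small-divisor bound combined with one elementary inequality relating the polynomial weight $|k|^{\tau}$ to the exponential Gevrey weights. Expanding $\eta(\sigma)=\sum_{k\in\mathbb Z^d}\eta_ke^{ik\cdot\sigma}$, the zero-average hypothesis gives $\eta_0=0$; seeking $\phi(\sigma)=\sum_{k\in\mathbb Z^d}\phi_ke^{ik\cdot\sigma}$ and matching coefficients turns the equation into $(e^{i\omega\alpha\cdot k}-1)\phi_k=\eta_k$ for every $k$. The mode $k=0$ is automatically consistent and we set $\phi_0=0$ to normalize the average; for $k\neq 0$ the Diophantine condition forces $e^{i\omega\alpha\cdot k}\neq 1$, so necessarily $\phi_k=\eta_k/(e^{i\omega\alpha\cdot k}-1)$. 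In particular the difference of two zero-average solutions has all Fourier coefficients vanishing, which yields uniqueness.

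For the quantitative estimate, use $|e^{i\vartheta}-1|=2|\sin(\vartheta/2)|$ together with $|\sin x|\ge \tfrac{2}{\pi}|x|$ on $[-\tfrac{\pi}{2},\tfrac{\pi}{2}]$: choosing for each $k$ the integer $n$ with $|\omega\alpha\cdot k-2n\pi|\le\pi$, we get $|e^{i\omega\alpha\cdot k}-1|\ge\tfrac{2}{\pi}|\omega\alpha\cdot k-2n\pi|\ge\tfrac{2}{\pi}\nu|k|^{-\tau}$, hence $|\phi_k|\le\tfrac{\pi}{2}\nu^{-1}|k|^{\tau}|\eta_k|$. It then remains to compare $|k|^{\tau}e^{\beta R'|k|_{\beta}}$ with $e^{\beta R|k|_{\beta}}$. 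Writing $\delta=R-R'>0$ and using $|k|\le|k|_1\le|k|_{\beta}^{\beta}$, one has $|k|^{\tau}\le|k|_{\beta}^{\tau\beta}$, and since $\sup_{x\ge 0}x^{\tau\beta}e^{-\beta x}=(\tau/e)^{\tau\beta}$ (the maximum being attained at $x=\tau$), substituting $x=\delta|k|_{\beta}$ yields $|k|^{\tau}\le(\tau/e)^{\tau\beta}\,\delta^{-\tau\beta}e^{\beta\delta|k|_{\beta}}$, i.e. $|k|^{\tau}e^{\beta R'|k|_{\beta}}\le(\tau/e)^{\tau\beta}(R-R')^{-\tau\beta}e^{\beta R|k|_{\beta}}$. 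Summing this against $|\eta_k|$ over $k\neq 0$ gives $\|\phi\|_{\mathcal F_{\beta,R'}}\le C(\tau,\beta)\nu^{-1}(R-R')^{-\tau\beta}\|\eta\|_{\mathcal F_{\beta,R}}$ with $C(\tau,\beta)=\tfrac{\pi}{2}(\tau/e)^{\tau\beta}$.

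Finally, since this bound is finite, $\phi$ is a well-defined element of $\mathcal F_{\beta,R'}(\mathbb T^d)\subset C^{\infty}(\mathbb T^d)$, and the absolute convergence of all the series involved justifies the termwise coefficient matching above, so $\phi$ genuinely solves the equation. The only delicate point is the comparison step: quantifying how the $|k|^{\tau}$ loss coming from the small divisors is absorbed into the prefactor $(R-R')^{-\tau\beta}$; this is exactly where the Gevrey exponent $\beta$ enters, through the inequality $|k|_1\le|k|_{\beta}^{\beta}$ and the resulting exponent $\tau\beta$ on $(R-R')$. Everything else is routine Fourier-series bookkeeping together with the elementary small-divisor bound.
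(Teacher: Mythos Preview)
Your proof is correct and follows essentially the same route as the paper: solve termwise in Fourier coefficients, bound the small divisors via $|e^{it}-1|\ge\tfrac{2}{\pi}\operatorname{dist}(t,2\pi\mathbb Z)$, pass from $|k|$ (or $|k|_1$) to $|k|_\beta^{\beta}$, and absorb the polynomial factor $|k|_\beta^{\tau\beta}$ into the exponential weight by the elementary maximum $\sup_{x\ge 0}x^{\tau\beta}e^{-\beta x}=(\tau/e)^{\tau\beta}$, yielding the same constant $C(\tau,\beta)=\tfrac{\pi}{2}(\tau/e)^{\tau\beta}$ as the paper.
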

\begin{proof}
    Expanding $\phi$, $\eta$ into Fourier series, we obtain that $\phi_k=\frac{\eta_k}{e^{ik\cdot\omega\alpha}-1}$, for all $k\in\mathbb Z^d\setminus\{0\}$, and $\phi_0=0$. Since $\frac{\pi}{2}|e^{it}-1|\ge{\rm dist}(t,2\pi\mathbb Z)$, it follows that \begin{align*}
        \|\phi\|_{\mathcal F_{\beta,R'}}&=\sum_{k\in\mathbb Z^d}e^{\beta R'|k|_{\beta}}|\phi_k|=\sum_{k\in\mathbb Z^d\setminus\{0\}}e^{\beta R'|k|_{\beta}}|\phi_k|\\
        &\le\sum_{k\in\mathbb Z^d\setminus\{0\}}e^{\beta R'|k|_{\beta}}|\frac{\eta_k}{e^{ik\cdot\omega\alpha}-1}|
        \le\frac{\pi}{2}\sum_{k
        \in\mathbb Z^d\setminus\{0\}}e^{\beta R'|k|_{\beta}}\nu^{-1}|k|_1^{\tau}|\eta_k|\\
        &\le\frac{\pi}{2}\sum_{k\in\mathbb Z^d\setminus\{0\}}e^{\beta R|k|_{\beta}}e^{-\beta(R-R')|k|_{\beta}}|k|_1^{\tau}\nu^{-1}|\eta_k|\\
        &\le\frac{\pi}{2}\sum_{k\in\mathbb Z^d\setminus\{0\}}e^{\beta R|k|_{\beta}}e^{-\beta(R-R')|k|_{\beta}}|k|_\beta^{\beta\tau}\nu^{-1}|\eta_k|.
    \end{align*}
    The last inequality  follows from the fact that $|k|_{\beta}^{\beta}\ge|k|_1$.
    
    Define $f(x)=e^{-(R-R')x}x^{\tau}$, it is easy to see that $f(x)\le f(\frac{\tau}{R-R'})=e^{-\tau}(\frac{\tau}{R-R'})^{\tau}$. Therefore,
    \begin{align*}
        \|\phi\|_{\mathcal F_{\beta,R'}}&\le\frac{\pi}{2}\sum_{k\in\mathbb Z^d\setminus\{0\}}e^{\beta R|k|_{\beta}}e^{-\tau\beta}\tau^{\tau\beta}(R-R')^{-\tau\beta}\nu^{-1}|\eta_k|\\
        &=C(\tau,\beta)\nu^{-1}(R-R')^{-\tau\beta}\|\eta\|_{\mathcal F_{\beta,R}}. \qedhere
    \end{align*}    
\end{proof}

\subsection{Composition}

\begin{lem}\label{composition0}\cite{03M.J-Pierre&S.David}
    Assume $Y\in G_{\beta, R}(\mathbb T^N)$, $u=(u_{[1]},\cdots,u_{[N]})\in(G_{\beta,R_1}(\mathbb T^d))^N$. If 
    \begin{align*}
        \|u_{[1]}\|_{\beta,R_1}-\|u_{[1]}\|_{C^0(\mathbb T^d)},\cdots,\|u_{[N]}\|_{\beta,R_1}-\|u_{[N]}\|_{C^0(\mathbb T^d)}\le\frac{R^\beta}{d^{\beta-1}},
    \end{align*}
    then $Y\circ u\in G_{\beta,R_1}(\mathbb T^d)$ and $\|Y\circ u\|_{\beta,R_1}\le\|Y\|_{\beta,R}$.
\end{lem}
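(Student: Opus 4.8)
The plan is to derive the bound directly from the Fa\`{a} di Bruno formula, but \emph{by matching the coefficient of each derivative of $Y$ on both sides} rather than by a crude Cauchy estimate; this is what produces the sharp constant $1$, and it is in essence the argument of \cite{03M.J-Pierre&S.David}. I will present the scalar case $d=N=1$ in detail, where the combinatorics is most transparent, and then indicate the changes for the statement as given.

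\emph{The scalar model.} Write $f=Y$ and $g=u_{[1]}$, and set $a_i:=R_1^{i\beta}\|g^{(i)}\|_{C^0}/(i!)^\beta$ for $i\ge1$, so that $\sum_{i\ge1}a_i=\|g\|_{\beta,R_1}-\|g\|_{C^0}=:\rho\le R^\beta$ by hypothesis. Since $f\in G_{\beta,R}(\mathbb T)$ each $\|f^{(k)}\|_{C^0}$ is finite and $\|f^{(k)}\circ g\|_{C^0}\le\|f^{(k)}\|_{C^0}$. Fa\`{a} di Bruno gives, for $n\ge1$, $\partial^n(f\circ g)=\sum_{k=1}^{n}(f^{(k)}\circ g)\,B_{n,k}(g',\dots,g^{(n-k+1)})$ with $|B_{n,k}|\le\sum_{\sum_i ij_i=n,\,\sum_i j_i=k}\frac{n!}{\prod_i j_i!}\prod_i(\|g^{(i)}\|_{C^0}/i!)^{j_i}$. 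Regrouping the series $\|f\circ g\|_{\beta,R_1}=\sum_{n\ge0}\frac{R_1^{n\beta}}{(n!)^\beta}\|\partial^n(f\circ g)\|_{C^0}$ according to the value of $k$, the claim reduces to the per-$k$ estimate
\[
\sum_{n\ge k}\frac{R_1^{n\beta}}{(n!)^\beta}\,|B_{n,k}|\big(g',\dots,g^{(n-k+1)}\big)\ \le\ \frac{R^{k\beta}}{(k!)^\beta},\qquad k\ge1.
\]

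To prove this, use $R_1^{n\beta}=\prod_i(R_1^{i\beta})^{j_i}$ and $R_1^{i\beta}\|g^{(i)}\|_{C^0}/i!=a_i(i!)^{\beta-1}$ to rewrite the summand indexed by $(j_i)$ (with $n=n(j):=\sum_i ij_i$) as $\frac{1}{(n(j)!)^{\beta-1}\prod_i j_i!}\prod_i a_i^{j_i}(i!)^{(\beta-1)j_i}$. The elementary inequality $\prod_i(i!)^{j_i}\le n(j)!/k!$ — proved by induction on $k$ from $\binom{n}{m_1,\dots,m_k}=\binom{n}{m_1}\binom{n-m_1}{m_2,\dots,m_k}$ and $\binom{n}{m_1}\ge n\ge k$ for $1\le m_1\le n-1$ — bounds the factorial factor by $(k!)^{1-\beta}$, while the multinomial theorem gives $\sum_{\sum_i j_i=k}\frac{k!}{\prod_i j_i!}\prod_i a_i^{j_i}=\rho^k$. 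Hence the left-hand side above is at most $\rho^k/(k!)^\beta\le R^{k\beta}/(k!)^\beta$, and summing the regrouped series back gives $\|f\circ g\|_{\beta,R_1}\le\|f\|_{C^0}+\sum_{k\ge1}\frac{R^{k\beta}}{(k!)^\beta}\|f^{(k)}\|_{C^0}=\|f\|_{\beta,R}$.

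\emph{General case and the main obstacle.} For arbitrary $d,N$ one runs the same scheme with the multivariate Fa\`{a} di Bruno formula: for $\gamma\in\mathbb N^d$, $\partial^\gamma(Y\circ u)$ is a finite sum over set partitions of an index set of size $|\gamma|$ refining $\gamma$, each block $s$ carrying a nonzero multi-index $\gamma^{(s)}\in\mathbb N^d$ with $\sum_s\gamma^{(s)}=\gamma$ and a coordinate $l(s)\in\{1,\dots,N\}$, the term being $(\partial^mY\circ u)\prod_s\partial^{\gamma^{(s)}}u_{[l(s)]}$ with $m_l=\#\{s:l(s)=l\}$. Matching the coefficient of $\|\partial^mY\|_{C^0}$ in $\sum_{\gamma}\frac{R_1^{|\gamma|\beta}}{(\gamma!)^\beta}\|\partial^\gamma(Y\circ u)\|_{C^0}$ against $R^{|m|\beta}/(m!)^\beta$, the sums over the block multi-indices reproduce, for the coordinate $l$ used $m_l$ times, the factor $(\|u_{[l]}\|_{\beta,R_1}-\|u_{[l]}\|_{C^0})^{m_l}$ — the subtraction forced precisely because the blocks are nonempty, $\gamma^{(s)}\ne0$ — while the passage from the multi-index $\gamma$ to unordered-partition bookkeeping produces combinatorial factors bounded by $|\gamma|!/\gamma!\le d^{|\gamma|}$, which after the $\beta$-weighting leave a residue $d^{(\beta-1)|\gamma|}$. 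This residue is exactly why the hypothesis must read $\|u_{[l]}\|_{\beta,R_1}-\|u_{[l]}\|_{C^0}\le R^\beta/d^{\beta-1}$ and not merely $\le R^\beta$. The main obstacle is this multivariate combinatorial accounting — writing the Bell-type coefficients in $d$ and $N$ variables at once, checking the factorial inequality in multi-index form, and keeping the $d^{(\beta-1)|\gamma|}$ residue absorbed by the smallness condition; once this is in place, telescoping back to $\|Y\|_{\beta,R}$ (which in particular yields convergence of the series, hence $Y\circ u\in G_{\beta,R_1}(\mathbb T^d)$) proceeds exactly as in the scalar model.
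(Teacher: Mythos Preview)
The paper does not prove this lemma at all; it merely cites \cite{03M.J-Pierre&S.David} and uses the result as a black box. Your argument is precisely the Marco--Sauzin one: expand via Fa\`a di Bruno, regroup the double sum by the order $k$ of the outer derivative, and control the Bell-polynomial combinatorics through the factorial inequality $\prod_i(i!)^{j_i}\le n!/k!$, so that the coefficient of $\|f^{(k)}\|_{C^0}$ collapses to $\rho^k/(k!)^\beta\le R^{k\beta}/(k!)^\beta$. The scalar computation you wrote out is correct in every detail, and your identification of the $d^{\beta-1}$ factor in the multivariate case as the residue of the multinomial bound $|\gamma|!/\gamma!\le d^{|\gamma|}$ (raised to the $(\beta-1)$-th power when passing from $(\gamma!)^{-1}$ to $(\gamma!)^{-\beta}$) is exactly the mechanism that forces the hypothesis $\le R^\beta/d^{\beta-1}$ rather than $\le R^\beta$.

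Since the paper provides no proof of its own, there is nothing further to compare; your proposal simply supplies the argument that the citation points to. The general-case paragraph is admittedly a sketch rather than a full proof --- the multivariate Fa\`a di Bruno bookkeeping with both source dimension $d$ and target dimension $N$ is genuinely tedious to write out --- but the structure you describe is correct and matches \cite{03M.J-Pierre&S.David}.
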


\begin{lem}\label{composition}
    Let $\hat h(\sigma)\in G_{\beta,R}(\mathbb T^d)$, $\widehat H(\zeta_0,\zeta_1,\cdots,\zeta_L)\in G_{\beta,\bar R}^2((\mathbb T^d)^{L+1})$, where $\bar R$ satisfies $\frac{\bar R^{\beta}}{d^{\beta-1}}-dR^{\beta}=\|\hat h\|_{\beta,R}+\iota$, for some $\iota>0$. Define the operator $\Psi$ acting on Gevrey functions by $(\Psi[\hat h])(\sigma)=\widehat H(\sigma+\hat h(\sigma)\alpha,\sigma+\omega\alpha+\hat h(\sigma+\omega\alpha)\alpha,\cdots,\sigma+L\omega\alpha+\hat h(\sigma+L\omega\alpha)\alpha)$, where $\omega\in\mathbb R$.
    
\begin{itemize}
    \item [(i)] For all $R'\le R$, if $\|\hat h^*- \hat h\|_{\beta,R'}\le\iota$, then $\Psi[\hat h^*]\in G_{\beta,R'}(\mathbb T^d)$, and $\|\Psi[\hat h^*]\|_{\beta,R'}\le \|\widehat H\|_{\beta,\bar R}$. \smallskip
    \item [(ii)] For all $R'\le R$, if $\|\hat h^*-\hat h\|_{\beta,R'}\le \iota$, then $\|\Psi[\hat h^*]-\Psi[\hat h]\|_{\beta,R'}\le(L+1)\|D^1\widehat H\|_{{\beta,\bar R}}\|\hat h^*-\hat h\|_{\beta, R'}$. 
    \smallskip
    \item [(iii)] For all $R'\le R$, $(D\Psi[\hat h]\widehat\Delta)(\sigma):=\sum_{j=0}^{L}\partial_{\alpha}^{(j)}\widehat H(\sigma+\hat h(\sigma)\alpha,\cdots,\sigma+L\omega\alpha+\hat h(\sigma+L\omega\alpha)\alpha)(\widehat\Delta(\sigma+j\omega\alpha))$, where $\partial_{\alpha}^{(j)}=\alpha\cdot\partial_j=\alpha\cdot\frac{\partial}{\partial\zeta_j}$,  then \[
\|\Psi[\hat h^*]-\Psi[\hat h]-D\Psi[\hat h](\hat h^*-\hat h)\|_{\beta,R'}\le (L+1)^2 \|D^2\widehat H\|_{{\beta,\bar R}}\|\hat h^*-\hat h\|_{\beta,R'}^2.
\]
\end{itemize}
\end{lem}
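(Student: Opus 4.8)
The first step is to recognize that $\Psi[\hat h^{*}]=\widehat H\circ G^{*}$, where $G^{*}\colon\mathbb T^{d}\to(\mathbb T^{d})^{L+1}$ is the map whose $(j,m)$-th coordinate ($0\le j\le L$, $1\le m\le d$) is $u^{*}_{[(j,m)]}(\sigma)=\sigma_{m}+j\omega\alpha_{m}+\alpha_{m}\hat h^{*}(\sigma+j\omega\alpha)$; this composition is well defined because $\widehat H$ is $(2\pi\mathbb Z)^{d}$-periodic in each of its $L+1$ blocks. Each coordinate map is an affine part $\sigma_{m}+j\omega\alpha_{m}$ plus the translated scalar multiple $\alpha_{m}\hat h^{*}(\cdot+j\omega\alpha)$, i.e.\ exactly the ``identity $+$ Gevrey displacement'' shape that Lemma~\ref{composition0} is built to handle. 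The plan is: deduce (i) by applying Lemma~\ref{composition0} directly to $\widehat H$ and $G^{*}$; deduce (ii) and (iii) from the first- and second-order Taylor formulas with integral remainder along the affine homotopy $G^{t}$ from $G=G[\hat h]$ to $G^{*}=G[\hat h^{*}]$, applying Lemma~\ref{composition0} to the directional derivatives $\partial_{\alpha}^{(j)}\widehat H$, respectively $\partial_{\alpha}^{(j)}\partial_{\alpha}^{(j')}\widehat H$, composed with $G^{t}$, and closing with the Banach algebra property (Lemma~\ref{Banana^-^}).

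The one substantive estimate, common to all three items, is the verification of the hypothesis of Lemma~\ref{composition0}. Let $\tilde h$ be any Gevrey function with $\|\tilde h-\hat h\|_{\beta,R'}\le\iota$ (and $R'\le R$), with associated coordinate maps $\tilde u_{[(j,m)]}(\sigma)=\sigma_{m}+j\omega\alpha_{m}+\alpha_{m}\tilde h(\sigma+j\omega\alpha)$. Since the only surviving contribution of the affine part under $\partial^{k}$, $|k|\ge1$, is $\partial_{m}\sigma_{m}=1$, and since translating the argument by the constant $j\omega\alpha$ changes no $C^{0}$-norm of a derivative, one obtains
\[
\|\tilde u_{[(j,m)]}\|_{\beta,R'}-\|\tilde u_{[(j,m)]}\|_{C^{0}}\ \le\ (R')^{\beta}+|\alpha_{m}|\left(\|\tilde h\|_{\beta,R'}-\|\tilde h\|_{C^{0}}\right)\ \le\ (R')^{\beta}+\iota+\|\hat h\|_{\beta,R}.
\]
Since $\tfrac{\bar R^{\beta}}{d^{\beta-1}}=\|\hat h\|_{\beta,R}+\iota+dR^{\beta}$ and $(R')^{\beta}\le R^{\beta}\le dR^{\beta}$, the right-hand side is $\le\tfrac{\bar R^{\beta}}{d^{\beta-1}}$. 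Hence Lemma~\ref{composition0} applies with $u=\widetilde G$ and $Y$ any partial derivative of $\widehat H$ of order $\le2$ (all lying in $G_{\beta,\bar R}$ because $\widehat H\in G_{\beta,\bar R}^{2}$), giving $Y\circ\widetilde G\in G_{\beta,R'}(\mathbb T^{d})$ with $\|Y\circ\widetilde G\|_{\beta,R'}\le\|Y\|_{\beta,\bar R}$. Item (i) is the special case $\tilde h=\hat h^{*}$, $Y=\widehat H$.

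For (ii) I would write $\Psi[\hat h^{*}]-\Psi[\hat h]=\int_{0}^{1}\tfrac{d}{dt}\widehat H(G^{t}(\sigma))\,dt$, where $G^{t}$ is the map of $\hat h_{t}:=\hat h+t(\hat h^{*}-\hat h)$ (so $\|\hat h_{t}-\hat h\|_{\beta,R'}\le\iota$). The $j$-th block of $\partial_{t}G^{t}$ is $(\hat h^{*}-\hat h)(\sigma+j\omega\alpha)\,\alpha$ and $\alpha\cdot\nabla_{\zeta_{j}}=\partial_{\alpha}^{(j)}$, so the chain rule gives $\tfrac{d}{dt}\widehat H(G^{t}(\sigma))=\sum_{j=0}^{L}(\partial_{\alpha}^{(j)}\widehat H)(G^{t}(\sigma))\,(\hat h^{*}-\hat h)(\sigma+j\omega\alpha)$. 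Estimating $\|\cdot\|_{\beta,R'}$: the norm of the integral is at most the integral of the norm; Lemma~\ref{Banana^-^} splits each summand; translation invariance gives $\|(\hat h^{*}-\hat h)(\cdot+j\omega\alpha)\|_{\beta,R'}=\|\hat h^{*}-\hat h\|_{\beta,R'}$; the estimate above (with $\tilde h=\hat h_{t}$, $Y=\partial_{\alpha}^{(j)}\widehat H$) bounds $\|(\partial_{\alpha}^{(j)}\widehat H)\circ G^{t}\|_{\beta,R'}$ by $\|\partial_{\alpha}^{(j)}\widehat H\|_{\beta,\bar R}$; and $\|\partial_{\alpha}^{(j)}\widehat H\|_{\beta,\bar R}\le\|D^{1}\widehat H\|_{\beta,\bar R}$ since $\partial_{\alpha}^{(j)}=\sum_{m}\alpha_{m}\partial_{\zeta_{j,m}}$ is a combination of first partials with $|\alpha|\le1$. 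Summing the $L+1$ terms (with $\int_{0}^{1}dt=1$) gives (ii). Item (iii) runs the same way from the second-order Taylor identity with integral remainder,
\[
\widehat H(G^{*})-\widehat H(G)-D\widehat H(G)(G^{*}-G)=\int_{0}^{1}(1-t)\,D^{2}\widehat H(G^{t})[\,G^{*}-G,\ G^{*}-G\,]\,dt.
\]
Expanding the Hessian pairing produces $\sum_{j,j'}(\partial_{\alpha}^{(j)}\partial_{\alpha}^{(j')}\widehat H)(G^{t}(\sigma))\,(\hat h^{*}-\hat h)(\sigma+j\omega\alpha)(\hat h^{*}-\hat h)(\sigma+j'\omega\alpha)$; one checks that $D\widehat H(G)(G^{*}-G)$ coincides with $D\Psi[\hat h](\hat h^{*}-\hat h)$ as defined in the statement, and the same ingredients plus $\|\partial_{\alpha}^{(j)}\partial_{\alpha}^{(j')}\widehat H\|_{\beta,\bar R}\le\|D^{2}\widehat H\|_{\beta,\bar R}$ and $\int_{0}^{1}(1-t)\,dt=\tfrac12$ give the $(L+1)^{2}$ bound.

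The main — and essentially the only — obstacle is the domain condition verified in the second paragraph: the hypothesis relating $\bar R$, $R$, $\iota$ and $\|\hat h\|_{\beta,R}$ is calibrated precisely so that not just the endpoints $G,G^{*}$ but the whole segment $\{G^{t}\}_{t\in[0,1]}$, and all $L+1$ translated copies of $\hat h_{t}$ occurring in it, stay inside the radius of validity of Lemma~\ref{composition0}, uniformly in $t$. Everything else is bookkeeping: the translations $\sigma\mapsto\sigma+j\omega\alpha$ are norm-preserving, and grouping the chain rule so that the contractions with $\alpha$ become the directional derivatives $\partial_{\alpha}^{(j)}$ is exactly what keeps the constants linear, respectively quadratic, in $L+1$ (and proportional to $\|D^{1}\widehat H\|_{\beta,\bar R}$, resp.\ $\|D^{2}\widehat H\|_{\beta,\bar R}$) rather than introducing powers of $d$.
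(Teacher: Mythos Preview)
Your proposal is correct and follows essentially the same route as the paper: verify the hypothesis of Lemma~\ref{composition0} for the coordinate maps $\sigma\mapsto\sigma_m+j\omega\alpha_m+\alpha_m\tilde h(\sigma+j\omega\alpha)$, then obtain (ii) and (iii) via the first- and second-order Taylor formulas with integral remainder along the segment $\hat h+t(\hat h^*-\hat h)$, invoking (i) for the derivatives of $\widehat H$ and the Banach algebra property. Your bookkeeping is in fact slightly sharper than the paper's (you record only the single $(R')^\beta$ contribution from $\partial_{e_m}\sigma_m$ rather than $d(R')^\beta$, and you keep the factor $\int_0^1(1-t)\,dt=\tfrac12$ in (iii)), but the structure is identical.
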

\begin{proof}
    (i). Define \[\phi_i:\mathbb T^d\to\mathbb T^d,\,\,\phi_i(\sigma):=\sigma+i\omega\alpha+\hat h^*(\sigma+i\omega\alpha)\alpha,\,\,0\le i\le L,\]
    and \[\phi_i(\sigma):=(\phi_{i,[1]}(\sigma),\cdots,\phi_{i,[d]}(\sigma)),\,\,\text{where}\,\,\phi_{i,[j]}(\sigma)=\sigma_j+i\omega\alpha_j+\hat h^*(\sigma+i\omega\alpha)\alpha_j,\,\,1\le j\le d,\]
    then $\Psi[\hat h^*](\sigma)=\widehat H(\phi_0,\cdots,\phi_L)=\widehat H(\phi_{0,[1]},\cdots,\phi_{0,[d]},\cdots,\phi_{L,[1]},\cdots,\phi_{L,[d]})$. 

    For all $k\in\mathbb N^d\setminus\{0\}$, if $|k|_1=1$, we have $\|\partial^k\phi_{i,[j]}\|_{C^0(\mathbb T^d)}\le1+\|\partial^k\hat h^*\|_{C^0(\mathbb T^d)}$, and if $|k|_1>1$, we have $\|\partial^k\phi_{i,[j]}\|_{C^0(\mathbb T^d)}\le \|\partial^k\hat h^*\|_{C^0(\mathbb T^d)}$.
    
    Therefore, 
    \begin{align*}
        \|\phi_{i,[j]}\|_{\beta,R'}-\|\phi_{i,[j]}\|_{C^0(\mathbb T^d)}&=\sum_{k\in\mathbb N^d\setminus\{0\}}\frac{R'^{|k|_1\beta}}{k!^{\beta}}\|\partial^k\phi_{i,[j]}\|_{C^0(\mathbb T^d)}\le\sum_{|k|_1=1}\frac{R'^{\beta}}{1}+\sum_{|k|_1\ge1}\frac{R'^{|k|\beta}}{k!^{\beta}}\|\partial^k\hat h^*\|_{C^0(\mathbb T^d)}\\
        &=d R'^{\beta}+\|\hat h^*\|_{\beta,R'}\le dR^{\beta}+\|\hat h\|_{\beta,R}+\iota=\frac{\bar R^{\beta}}{d^{\beta-1}}.
    \end{align*}
    By Lemma \ref{composition0}, we have $\Psi[\hat h^*]\in G_{\beta,R'}(\mathbb T^d)$, and $\|\Psi[\hat h^*]\|_{\beta,R'}\le \|\widehat H\|_{\beta,\bar R}$.

    (ii). By (i), we have
    \begin{align*}
        \|\Psi[\hat h^*]-\Psi[\hat h]\|_{\beta, R'}&=\|\int_0^1\sum_{j=0}^L\partial_{\alpha}^{(j)}\widehat H(\sigma+\hat h(\sigma)\alpha+s(\hat h^*(\sigma)-\hat h(\sigma))\alpha,\cdots,\sigma+L\omega\alpha+\hat h(\sigma+L\omega\alpha)\alpha\\&\quad\quad\quad\quad+s(\hat h^*(\sigma+L\omega\alpha)-\hat h(\sigma+L\omega\alpha)))\cdot(\hat h^*(\sigma+j\omega\alpha)-\hat h(\sigma+j\omega\alpha))ds\|_{\beta,R'}\\
        &\le\int_0^1\sum_{j=0}^L\|\partial_{\alpha}^{(j)}\widehat H(\sigma+\hat h(\sigma)\alpha+s(\hat h^*(\sigma)-\hat h(\sigma))\alpha,\cdots,\sigma+L\omega\alpha+\hat h(\sigma+L\omega\alpha)\alpha\\&\quad\quad\quad\quad+s(\hat h^*(\sigma+L\omega\alpha)-\hat h(\sigma+L\omega\alpha)))\|_{\beta,R'}\cdot(\hat h^*(\sigma+j\omega\alpha)-\hat h(\sigma+j\omega\alpha))\|_{\beta,R'}ds\\
        &\le\int_0^1\sum_{j=0}^L\|\partial_{\alpha}^{(j)}\widehat H\|_{\beta,\bar R}\|\hat h^*-\hat h\|_{\beta,R'}ds\le(L+1)\|D^1\widehat H\|_{\beta,\bar R}\|\hat h^*-\hat h\|_{\beta,R}.
    \end{align*}

    (iii). The proof is similar to (ii).
\end{proof}

\section{Long-range interaction model with Gevrey Potential}
\subsection{Hull function and extremal forces}
The Frenkel-Kontorova model describes a chain of particles interacting with their nearest neighbors, where the configuration is represented by a sequence $u=\{u_n\}_{n\in\mathbb Z}$ with $u_n\in\mathbb R$. The formal energy of the system is given by:
\begin{equation}
    \mathscr S(\{u_n\}_{n\in\mathbb Z})=\sum_{i\in\mathbb Z}\sum_{L=0}^{\infty}\widehat H_L(u_i\alpha,u_{i+1}\alpha,\cdots,u_{i+L}\alpha),
\end{equation}
where $\widehat H_L(\zeta_0,\zeta_1,\cdots,\zeta_L)\in G_{\beta,R}((\mathbb T^d)^{L+1})$, $\alpha\in[0,1]^d$ is rationally independent.

The equilibrium configurations of the system are critical points of the formal energy, satisfying {the discrete Euler-Lagrange equation}:
\begin{equation}\label{longrange}
    \frac{\partial\mathscr S}{\partial u_j}=\sum_{L=0}^\infty\sum_{i=j-L}^j\alpha\cdot\partial_{j-i}\widehat H_L(u_i\alpha,\cdots,u_{i+L}\alpha)=0,\,\,\forall j\in\mathbb Z,
\end{equation}
where $\partial_j=\frac{\partial}{\partial\zeta_j}$ for $j=0,1,\cdots,L$. We denote $\partial_{\alpha}^{(j)}=\alpha\cdot\partial_j$ in the following. It is clear that $\partial_{\alpha}^{(j)}\partial_{\alpha}^{(k)}=\partial_{\alpha}^{(k)}\partial_{\alpha}^{(j)}$, for all $j,k\in\mathbb Z$.

Let the rotation number $\omega\in\mathbb R$, and we seek quasi-periodic solutions of the form $$u_n=h(n\omega),\,\,\forall n\in\mathbb Z,$$
where $h(\theta)=\theta+\tilde h(\theta)$, and $\tilde h$ is a Gevrey quasi-periodic function with frequency $\alpha$,  that is, $\tilde h(\theta)=\hat h(\alpha \theta)$, where $\hat h$ is a shell function in $G_{\beta}(\mathbb{T}^d)$.


By substituting $u_n=h(n\omega)$ into the equilibrium equation and denoting 
\[
\theta:=j\omega,\,\,\sigma:=\alpha\theta,
\]
\[
h^{(k)}(\theta):=h(\theta+k\omega)=\theta+k\omega+\hat h(\sigma+k\omega\alpha),
\]
\[
 \gamma^{(k)}_L(\theta):=(h^{(k)}(\theta)\alpha,\cdots,h^{(k+L)}(\theta)\alpha)=(\alpha\theta+k\omega\alpha,\hat h(\alpha\theta+k\omega\alpha)\alpha,\cdots,\alpha\theta+(k+L)\omega\alpha+\hat h(\alpha\theta+(k+L)\omega\alpha)\alpha),
\]
\[
\hat\gamma^{(k)}_L(\sigma):=(\sigma+k\omega\alpha+\hat h(\sigma+k\omega\alpha)\alpha,\cdots,\sigma+(k+L)\omega\alpha+\hat h(\sigma+(k+L)\omega\alpha)\alpha)=\gamma_L^{(k)}(\theta),
\]
\[
h(\theta):=h^{(0)}(\theta),\,\,\hat\gamma_L(\sigma):=\hat\gamma_L^{(0)}(\sigma),
\]
we derive the functional equation for $\hat h$:
\begin{equation*}
\sum_{L=0}^\infty\sum_{i=j-L}^j\partial_\alpha^{(j-i)}\widehat H_L(\gamma_L^{(i-j)}(\theta))=\sum_{L=0}^\infty\sum_{k=0}^L\partial_\alpha^{(k)}\widehat H_L(\gamma_L^{(-k)}(\theta))=0.
\end{equation*}
If $\omega\alpha$ satisfies the Diophantine property, then (\ref{longrange}) holds if and only if $\mathscr E[\hat h](\theta)$ defined below vanishes:
\begin{equation}\label{y5}
\begin{aligned}
\mathscr E[\hat h](\theta)&:=\sum_{L=0}^\infty\sum_{k=0}^L\partial_\alpha^{(k)}\widehat H_L(\gamma_L^{(-k)}(\theta))\\
&=\sum_{L=0}^\infty\sum_{k=0}^L\partial_\alpha^{(k)}\widehat H_L(h(\theta-k\omega)\alpha,\cdots,h(\theta-k\omega+L\omega)\alpha)=0,\,\,\forall\theta\in\mathbb R.
\end{aligned}
\end{equation}
We also denote $\mathscr E[\hat h](\sigma)=\sum_{L=0}^\infty\sum_{k=0}^L\partial_\alpha^{(k)}\widehat H_L(\hat\gamma_L^{(-k)}(\sigma))$. If $\omega\alpha$ satisfies the Diophantine condition, then $\mathscr E[\hat h](\sigma)$ vanishes if and only if $\mathscr E[\hat h](\theta)$ vanishes.


Note that if $\hat h(\sigma)$ is a solution to equation (\ref{longrange}), then for any $\phi\in\mathbb R$, $\hat h(\sigma+\phi\alpha)+\phi$ also satisfies the same equation. Therefore, by choosing $\phi$, we can always assume our solution normalized in such a way that $$\mathop{\rm lim}\limits_{T\rightarrow\infty}\frac1{2T}\int_{-T}^T\tilde h(\theta)d\theta=\frac{1}{(2\pi)^d}\int_{\mathbb T^d}\hat h(\sigma)d\sigma=0.$$

\subsection{Statement of the main theorem}

\begin{thm}\label{main thm}
    Let $h(\theta)=\theta+\tilde h(\theta)$, $\tilde h(\theta)=\hat h(\alpha\theta)=\sum_{k\in\mathbb Z^d}\hat h_ke^{ik\cdot\alpha\theta}$, $\hat h_0=0$, $\hat h\in G_{\beta,R_0}^1(\mathbb T^d)$, where $\beta>1, R_0>0$ 
    and $\alpha\in[0,1]^{d}$ is rationally independent. Denote $\hat l=1+\partial_\alpha\hat h$, $T_{x}(\sigma)=\sigma+x$. We assume the following: 
    \begin{itemize}
        \item[{\rm (H1)}]
        Diophantine condition: $|\omega\alpha\cdot k-2n\pi|\ge\nu|k|_1^{-\tau}$ holds for $\forall k\in\mathbb Z^d\setminus\{0\}$, $\forall n\in\mathbb Z$, where $\tau,\nu>0$.
        \item[{\rm (H2)}]
        Non-degeneracy condition: $\|\hat l(\sigma)\|_{\beta,R_0}\le N^+$, $\|(\hat l(\sigma))^{-1}\|_{\beta,R_0}\le N^-$, $\big|\big<\frac1{\hat l\cdot\hat l\circ T_{-\omega\alpha}}\big>\big|\ge c>0$, where $N^+,N^-,c$ are called the condition numbers. 
        \item[{\rm (H3)}] 
        The interactions $H_L\in G_{\beta,\bar R}^3((\mathbb T^d)^{L+1})$ where $\bar R$ satisfies $\frac{\bar R^{\beta}}{d^{\beta-1}}-dR_0^\beta=\|\hat h\|_{\beta,R_0}+\iota$ for some  $\iota>0$. Denote\[
        M_L=\max_{i=0,1,2,3}(\|D^iH_L\|_{\beta,\bar R}),
        \]
        \[
        \delta=C\sum_{L\ge2}M_LL^4,
        \]
        where C is a combinatorial constant that will be made explicit during the proof.
        \item[{\rm(H4)}]
        Assume that the inverses indicated below exist and have the indicated bounds:
        $$\|(\partial_{\alpha}^{(0)}\partial_{\alpha}^{(1)}\widehat H_1)^{-1}\|_{\beta,\bar R}\le T,$$
        $$\left |\left (\int_{\mathbb T^d}\mathcal C_{0,1,1}^{-1}\right)^{-1}\right|\le U,$$
        where $\mathcal C_{0,1,1}$ is defined in (\ref{y21}).
        \item[{\rm(H5)}]
        $(N^-)^2T\delta<\frac12, (N^-)^2UT<\frac12.$
    \end{itemize}
Assume furthermore that $\|\mathscr E[\hat h]\|_{\beta,R_0}=\epsilon_0\le\epsilon^*(N^+, N^-, c, \tau,\beta,d,\iota, R_0,T,\delta,  U)\nu^4$, where $\epsilon^*>0$ is a function which we will make explicit along the proof.
Then, there exists a Gevrey function $\hat h^*\in G_{\beta,\frac{R_0}{2}}(\mathbb T^d)$ such that $$\mathscr E[\hat h^*]=0.$$
Moreover,$$\|\hat h-\hat h^*\|_{\beta,\frac{R_0}{2}}\le C_1(N^+, N^-, c, \tau, \beta,d, \iota, R_0,T, \delta, U)\nu^{-6}\epsilon_0.$$
The solution $\hat h^*$ is the only solution of $\mathcal{E}[\hat h^*]=0$ with zero average for $\hat h^*$ in a ball centered at $\hat h$ in $G_{\beta,\frac{R_0}{2}}(\mathbb T^d)$, i.e. $\hat h^*$ is the unique solution in the set
$$\{\hat g\in G_{\beta,\frac{R_0}{2}}(\mathbb T^d): \langle \hat g\rangle=0, \|\hat g-\hat h\|_{\beta,\frac{R_0}{2}}\le C_2(N^+, N^-, c, \tau, \beta,d, \iota, R_0,T, \delta, U)\nu^2\}$$
where $C_1$, $C_2$ will be made explicit along the proof.
\end{thm}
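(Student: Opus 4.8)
The plan is to prove Theorem~\ref{main thm} by a quasi-Newton (Nash--Moser type) iteration carried out in the scale $\{G_{\beta,R}(\mathbb T^d)\}$ with $R$ decreasing from $R_0$ to $R_0/2$, exploiting the variational (Lagrangian) origin of the equilibrium equation. Everything hinges on a single quadratically convergent step: given $\hat h$ with $E:=\mathscr E[\hat h]$ and $\|E\|_{\beta,R}=\epsilon$, construct a correction $\widehat\Delta$ with $\langle\widehat\Delta\rangle=0$ so that, on a slightly smaller width $R'<R$, $\|\mathscr E[\hat h+\widehat\Delta]\|_{\beta,R'}=O(\epsilon^2)$. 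Using the second-order Taylor expansion of the composition operator $\Psi$ (Lemma~\ref{composition}(iii)) one writes $\mathscr E[\hat h+\widehat\Delta]=\mathscr E[\hat h]+D\mathscr E[\hat h]\widehat\Delta+\mathcal R$, with $\|\mathcal R\|_{\beta,R'}\lesssim\big(\sum_{L\ge2}M_LL^2+M_0+M_1\big)\|\widehat\Delta\|_{\beta,R'}^2$, so it suffices to solve $D\mathscr E[\hat h]\widehat\Delta=-E$ up to an $O(\epsilon^2)$ error. Differentiating \eqref{y5} gives the linearized operator
\[
(D\mathscr E[\hat h]\widehat\Delta)(\sigma)=\sum_{L\ge0}\sum_{k,m=0}^{L}\partial_\alpha^{(k)}\partial_\alpha^{(m)}\widehat H_L\big(\hat\gamma_L^{(-k)}(\sigma)\big)\,\widehat\Delta(\sigma+(m-k)\omega\alpha),
\]
whose coefficients are controlled by Lemmas~\ref{composition0} and~\ref{composition} and the Banach-algebra property; by (H3) the part with $L\ge2$ is a perturbation of relative size $\lesssim\delta$, and the $L=0,1$ part is a three-term operator with shifts $0,\pm\omega\alpha$.

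\textbf{Reducibility and the two homological equations.} The key structural fact is the identity
\[
D\mathscr E[\hat h](\hat l)=\partial_\alpha\mathscr E[\hat h]=\partial_\alpha E,
\]
obtained by differentiating at $\phi=0$ the invariance $\mathscr E[\hat h(\cdot+\phi\alpha)+\phi](\sigma)=\mathscr E[\hat h](\sigma+\phi\alpha)$ coming from the one-parameter family of equilibria; thus $\hat l$ is an approximate kernel element with defect $O(\epsilon)$ (after a Cauchy estimate, Lemma~\ref{Cauchyest}). I then make the ansatz $\widehat\Delta=\hat l\,W+\xi\,\hat l$ with $\langle W\rangle=0$ and $\xi\in\mathbb R$, and use the symmetry $\partial_\alpha^{(k)}\partial_\alpha^{(m)}=\partial_\alpha^{(m)}\partial_\alpha^{(k)}$ together with a summation by parts to collapse the $L=0,1$ part of $D\mathscr E[\hat h](\hat l W)$ to
\[
\frac{1}{\hat l(\sigma)}\Big[(\mathcal C\,\mathrm D^{+}W)(\sigma)-(\mathcal C\,\mathrm D^{+}W)(\sigma-\omega\alpha)\Big],\qquad (\mathrm D^{+}W)(\sigma):=W(\sigma+\omega\alpha)-W(\sigma),
\]
where $\mathcal C$ is built from $\partial_\alpha^{(0)}\partial_\alpha^{(1)}\widehat H_1(\hat\gamma_1)$, $\hat l$ and $\hat l\circ T_{\omega\alpha}$ (essentially the object $\mathcal C_{0,1,1}$ of (H4)); the remaining contributions are $\mathcal L_{\ge2}(\hat l W)$, of size $\lesssim\delta\|W\|$, and a term $(\partial_\alpha E-\mathcal L_{\ge2}\hat l)\,W$ of size $\lesssim(\epsilon+\delta)\|W\|$. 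Solving $D\mathscr E[\hat h]\widehat\Delta=-E$ thus reduces to two first-order difference equations, each of the type treated in Lemma~\ref{homolem} (applied after passing between $\|\cdot\|_{\beta,R}$ and $\|\cdot\|_{\mathcal F_{\beta,R}}$ via Lemma~\ref{2norm}): first a difference equation for $V:=\mathcal C\,\mathrm D^{+}W$ with right-hand side $-\hat l E$ corrected by a constant, then $\mathrm D^{+}W=V/\mathcal C$ corrected by a constant. The additive constants, the parameter $\xi$, and the normalization $\langle\widehat\Delta\rangle=0$ are fixed by scalar linear equations whose solvability is guaranteed by the non-degeneracy hypotheses --- dividing by $\mathcal C$ uses $\|(\partial_\alpha^{(0)}\partial_\alpha^{(1)}\widehat H_1)^{-1}\|_{\beta,\bar R}\le T$ and $\|\hat l^{-1}\|_{\beta,R_0}\le N^-$, while choosing the constants uses $|\langle(\hat l\,\hat l\circ T_{-\omega\alpha})^{-1}\rangle|\ge c$ and $|(\int_{\mathbb T^d}\mathcal C_{0,1,1}^{-1})^{-1}|\le U$. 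The small-coefficient $W$-dependent terms, together with the mismatch $\xi\partial_\alpha E$, are absorbed by a contraction-mapping argument which converges precisely because of (H5): $(N^-)^2T\delta<\tfrac12$, $(N^-)^2UT<\tfrac12$. Each homological solve costs a factor $\nu^{-1}(R-R')^{-\tau\beta}$, so one obtains $\widehat\Delta$ with $\|\widehat\Delta\|_{\beta,R'}\lesssim\nu^{-2}(R-R')^{-a}P\,\epsilon$ and $\|\mathscr E[\hat h+\widehat\Delta]\|_{\beta,R'}\lesssim\nu^{-4}(R-R')^{-b}P'\,\epsilon^2$, where $P,P'$ depend only on $N^\pm,c,\tau,\beta,d,\iota,R_0,T,\delta,U$.

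\textbf{Iteration, convergence and uniqueness.} With the quadratic step available I iterate along $R_n=\tfrac{R_0}{2}(1+2^{-n})$, $\hat h_{n+1}=\hat h_n+\widehat\Delta_n$, so that $\epsilon_{n+1}\lesssim\nu^{-4}2^{bn}\epsilon_n^2$; the smallness assumption $\epsilon_0\le\epsilon^*\nu^4$ (with $\epsilon^*$ built from the constants above and the geometry of $\{R_n\}$) then forces $\epsilon_n\to0$ super-exponentially. One must verify that the non-degeneracy data (H2)--(H5) persist along the iteration: since $\hat l_n=1+\partial_\alpha\hat h_n$ changes by $\partial_\alpha\widehat\Delta_n$, controlled via Cauchy estimates by $(R_n-R_{n+1})^{-\beta}\|\widehat\Delta_n\|_{\beta,R_{n+1}}$, the condition numbers $N^\pm,c,T,U$ degrade by at most a fixed factor, so every step is legitimate. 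Summing $\sum_n\|\widehat\Delta_n\|_{\beta,R_{n+1}}$ yields $\hat h_n\to\hat h^*\in G_{\beta,R_0/2}(\mathbb T^d)$ with $\langle\hat h^*\rangle=0$, $\mathscr E[\hat h^*]=0$ and $\|\hat h-\hat h^*\|_{\beta,R_0/2}\le C_1\nu^{-6}\epsilon_0$, the exponent $-6$ arising from combining the per-step loss $\nu^{-2}$ with the power of $\nu$ already spent in meeting the convergence condition. For uniqueness, if $\hat h^*,\hat h^{**}$ are two zero-average solutions in the ball of radius $C_2\nu^2$, then $0=\mathscr E[\hat h^*]-\mathscr E[\hat h^{**}]=D\mathscr E[\hat h^*](\hat h^*-\hat h^{**})+O(\|\hat h^*-\hat h^{**}\|^2)$; inverting $D\mathscr E[\hat h^*]$ by the same reduction (valid because $\hat h^*$ inherits the non-degeneracy) gives $\|\hat h^*-\hat h^{**}\|\lesssim\nu^{-2}\|\hat h^*-\hat h^{**}\|^2$, hence $\hat h^*=\hat h^{**}$.

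\textbf{Main obstacle.} I expect the hardest part to be the algebra and the uniform estimates surrounding the reduction above: (a) carrying out the summation by parts for the full double sum over $(L;k,m)$ so that the factorization of the $L=0,1$ operator through two first-order differences is exact and the remainder is visibly of the small-coefficient type; (b) proving that the $L\ge2$ contributions --- both to the operator and to the Taylor remainder $\mathcal R$ --- are genuinely $O(\delta)$, which is exactly where the combinatorial weight $L^4$ and the convergence of $\sum_{L\ge2}M_LL^4$ are consumed, via repeated use of Lemmas~\ref{composition0} and~\ref{composition} with the $\bar R$-budget of (H3) respected at every step; and (c) getting the averages right: unlike in the nearest-neighbor case neither $\langle\mathscr E[\hat h]\rangle$ nor $\langle\hat l\,\mathscr E[\hat h]\rangle$ need vanish a priori, so the joint consistency of the two additive constants, the parameter $\xi$ and the normalization $\langle\widehat\Delta\rangle=0$ must be arranged carefully --- this is precisely the role of the extra invertibility hypothesis (H4) and of the condition number $c$ in (H2). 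A more pedestrian difficulty is that Lemma~\ref{homolem} lives in the $\mathcal F_{\beta,R}$-norm whereas the whole scheme is run in $G_{\beta,R}$, so every homological estimate must be bracketed by applications of Lemma~\ref{2norm} and the resulting width losses folded into the iteration without spoiling convergence.
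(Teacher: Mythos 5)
Your proposal is correct and follows essentially the same route as the paper: the quasi-Newton step based on the identity $D\mathscr E[\hat h]\hat l=\partial_\alpha\mathscr E[\hat h]$, the substitution $\widehat\Delta=\hat l\cdot(\text{unknown})$, the exact collapse of the symmetrized double sum into double differences $\mathcal S_{k-j}[\mathcal C_{j,k,L}\mathcal S_{j-k}\cdot]$ factored through $\mathcal S_{\pm1}$, the treatment of the $L\ge2$ part as a Neumann-series perturbation of size $\delta$ under (H5), the average adjustments via (H2) and (H4), the two homological solves bracketed by Lemma~\ref{2norm}, and the same iteration on shrinking Gevrey widths and local-uniqueness argument. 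The only cosmetic difference is that the paper moves the $O(\epsilon)$-coefficient term $\widehat\Delta\,(D\mathscr E[\hat h]\hat l)$ entirely into the quadratic error of the modified Newton equation rather than absorbing it by a contraction, but this does not change the substance of the argument.
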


\section{Proof of the main theorem}

\subsection{Quasi-Newton iteration}

Applying the quasi-Newton method, we seek the solution of 
\begin{equation}\label{y6}
D\mathscr E[\hat h]\cdot\widehat\Delta=-\mathscr E[\hat h].
\end{equation}
where $D$ denotes the derivative of the functional $\mathscr E$ with respect to $\hat h$. Then $\hat h+\widehat\Delta$ will be a better approximate solution of (\ref{y5}). Unfortunately, equation (\ref{y6}) is hard to solve and we will modify it into the following equation: 
\begin{equation}\label{modeq}
\hat l(D\mathscr E[\hat h]\cdot\widehat\Delta)-\widehat\Delta(D\mathscr E[\hat h]\cdot\hat l)=-\hat l\mathscr E[\hat h],
\end{equation}
where $\hat l=1+\partial_{\alpha}\hat h$.
Equation (\ref{modeq}) is just equation (\ref{y6}) multiplied by $\hat l$ and added the extra term in $\widehat\Delta(D\mathscr E[\hat h]\hat l)$ on the left-hand side. 

Note that 
\begin{equation}\label{y8}
\frac{d}{d\theta}\mathscr E[\hat h](\theta)=(D\mathscr E[\hat h]\cdot\hat l)(\theta)=\sum_{L\in\mathbb N}\sum_{k=0}^L\sum_{j=0}^L\partial_{\alpha}^{(k)}\partial_{\alpha}^{(j)}\widehat H_L(h(\theta-k\omega)\alpha,\cdots,h(\theta-k\omega+L\omega)\alpha)\cdot(1+\partial_{\alpha}\hat h(\sigma-k\omega\alpha+j\omega\alpha)).
\end{equation}
Therefore, we formally obtain
\[
\begin{split}
\mathscr E[\hat h+\widehat\Delta]&=(\mathscr E[\hat h+\widehat\Delta]-\mathscr E[\hat h]-D\mathscr E[\hat h]\widehat\Delta)+\hat l^{-1}(\hat l\mathscr E[\hat h]+\hat l(D\mathscr E[\hat h]\widehat\Delta))\\
&=(\mathscr E[\hat h+\widehat\Delta]-\mathscr E[\hat h]-D\mathscr E[\hat h]\widehat\Delta)+\hat l^{-1}\widehat\Delta(D\mathscr E[\hat h]\hat l)\\
&=(\mathscr E[\hat h+\widehat\Delta]-\mathscr E[\hat h]-D\mathscr E[\hat h]\widehat\Delta)+\hat l^{-1}\widehat\Delta\frac{d}{d\theta}\mathscr E[\hat h].
\end{split}
\]

The standard algorithm to solve (\ref{modeq}) is as follows.

Let $\widehat\Delta=\hat l\cdot\hat\eta$, then the unknowns $\widehat\Delta$ and $\hat\eta$ are equivalent due to the non-degeneracy assumption in Theorem~\ref{main thm}.
Substituting $\widehat\Delta=\hat l\cdot\hat\eta$ into (\ref{modeq}), we obtain 
\begin{equation}\label{y10}
\begin{split}
\sum_{L=0}^\infty\sum_{k=0}^L&\sum_{j=0}^L\partial_\alpha^{(k)}\partial_\alpha^{(j)}\widehat H_L(\hat\gamma_L^{(-k)}(\sigma))\hat l(\sigma)\hat l^{(j-k)}(\sigma)\hat\eta^{(j-k)}(\sigma)\\
&-\sum_{L=0}^\infty\sum_{k=0}^L\sum_{j=0}^L\partial_\alpha^{(k)}\partial_\alpha^{(j)}\widehat H_L(\hat\gamma_L^{(-k)}(\sigma))\hat l(\sigma)\hat l^{(j-k)}(\sigma)\hat\eta(\sigma)\\
=&-\hat l(\sigma)\mathscr E[\hat h](\theta),
\end{split}
\end{equation}
where $\hat l^{(j)}(\sigma)=\hat l(\sigma+j\omega\alpha)$ and $\hat \eta^{(j)}(\sigma)=\hat\eta(\sigma+j\omega\alpha)$.

For fixed $L\in\mathbb{N}$, we note that, when $j=k=0,\cdots,L$ the term in the first sum of the left-hand side of (\ref{y10}) cancels the one in the second sum. When $j\not=k$, we observe that we have four terms involving the mixed derivatives, that is
\begin{equation}\label{y11}
\begin{aligned}
&\partial_\alpha^{(k)}\partial_\alpha^{(j)}\widehat H _L(\hat\gamma_L^{(-k)}(\sigma))\hat l(\sigma)\hat l^{(j-k)}(\sigma)\hat\eta^{(j-k)}(\sigma)\\
+&\partial_\alpha^{(j)}\partial_\alpha^{(k)}\widehat H _L(\hat\gamma_L^{(-j)}(\sigma))\hat l(\sigma)\hat l^{(k-j)}(\sigma)\hat\eta^{(k-j)}(\sigma)\\
-&\partial_\alpha^{(k)}\partial_\alpha^{(j)}\widehat H _L(\hat\gamma_L^{(-k)}(\sigma))\hat l^{(j-k)}(\sigma)\hat l(\sigma)\hat\eta(\sigma)\\
-&\partial_\alpha^{(j)}\partial_\alpha^{(k)}\widehat H _L(\hat\gamma_L^{(-j)}(\sigma))\hat l^{(k-j)}(\sigma)\hat l(\sigma)\hat\eta(\sigma).
\end{aligned}
\end{equation}
We introduce the notations
\begin{equation}\label{y21}
\begin{split}
[\mathcal S_n\hat\eta](\sigma)&:=\hat\eta(\sigma+n\omega\alpha)-\hat\eta(\sigma),\quad\forall n\in\mathbb Z,\hat\eta\in G_{\beta}(\mathbb T^d),\\
\mathcal C_{j,k,L}(\sigma)&:=\partial_\alpha^{(k)}\partial_\alpha^{(j)}\widehat H_L(\hat\gamma_L^{(-k)}(\sigma))\hat l(\sigma)\hat l^{(j-k)}(\sigma).
\end{split}
\end{equation}
With notations above, we can rearrange (\ref{y11}) as
\begin{equation*}
\begin{split}
\mathcal C_{j,k,L}(\sigma)&\cdot[\hat\eta^{(j-k)}-\hat\eta](\sigma)\\
&-\mathcal C_{j,k,L}(\sigma+(k-j)\omega\alpha)[\hat\eta^{(j-k)}-\hat\eta](\sigma+(k-j)\omega\alpha)\\
&=-\mathcal S_{k-j}[\mathcal C_{j,k,L}\mathcal S_{j-k}\hat\eta](\sigma).
\end{split}
\end{equation*}
Therefore, (\ref{y10}) can be written as
\begin{equation}\label{y12}
    \sum_{L=1}^\infty\sum_{0\le j<k\le L}\mathcal S_{k-j}[\mathcal C_{j,k,L}\mathcal S_{j-k}\hat\eta](\sigma)=\hat l(\sigma)\mathscr E[\hat h](\theta).
\end{equation}

Now we study the invertibility of the operators $\mathcal S_n$. In fact, $\mathcal S_n:\mathcal F_{\beta,R}(\mathbb T^d)\to\overset{\circ}{\mathcal F}_{\beta,R}(\mathbb T^d)$ is diagonal on Fourier series. Due to the Diophantine condition, by Lemma \ref{homolem} for any given $\hat\eta\in\overset{\circ}{\mathcal F}_{\beta,R}(\mathbb T^d)$ and $R'<R$, we can find the solution of \[\mathcal S_n\hat \gamma=\hat\eta,\] where $\hat\gamma\in\mathcal F_{\beta,R'}(\mathbb T^d)$, and the solutions are unique up to additive constants. Therefore, for all $R'<R$, the operator $\mathcal S_n^{-1}:\overset{\circ}{\mathcal F}_{\beta,R}(\mathbb T^d)\to\overset{\circ}{\mathcal F}_{\beta,R'}(\mathbb T^d)$ is well-defined. By Lemma \ref{2norm} the operators $\mathcal S_n:G_{\beta,R}(\mathbb T^d)\to\overset{\circ}{G}_{\beta,R}(\mathbb T^d)$ and $\mathcal S_n^{-1}:\overset{\circ}{G}_{\beta,R}(\mathbb T^d)\to\overset{\circ}{G}_{\beta,R'}(\mathbb T^d)$ are also well-defined. 

Hence, we can define the operators $$\mathcal L_n^{\pm}:=\mathcal S^{-1}_{\pm1}\mathcal S_n: G_{\beta,R}(\mathbb T^d)\to\overset{\circ}{G}_{\beta,R}(\mathbb T^d),$$
$$\mathcal R_n^{\pm}:=\mathcal S_n\mathcal S^{-1}_{\pm1}:\overset{\circ}{G}_{\beta,R}(\mathbb T^d)\to\overset{\circ}{G}_{\beta,R}(\mathbb T^d).$$
And we have the following Lemma.
\begin{lem}
    For all $n\in\mathbb Z$,  we have
    \[
    \begin{split}
    &\|\mathcal L_n^\pm\|_{\mathscr L(G_{\beta,R}(\mathbb T^d),\overset{\circ}{G}_{\beta,R}(\mathbb T^d))}\le|n|,\\
    &\|\mathcal R_n^\pm\|_{\mathscr L(\overset{\circ}{G}_{\beta,R}(\mathbb T^d),\overset{\circ}{G}_{\beta,R}(\mathbb T^d))}\le|n|,
    \end{split}
    \]
    where $\mathscr L(X,Y)$ denotes the space of all bounded linear operators from $X$ to $Y$.
\end{lem}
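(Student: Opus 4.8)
\emph{Plan.} The key realization is that $\mathcal S_m$, $\mathcal S_{\pm1}$ and the partial inverses $\mathcal S_{\pm1}^{-1}$ are all diagonal Fourier multipliers, and that the symbol of $\mathcal S_{\pm1}^{-1}\mathcal S_n$ telescopes into a finite geometric sum of at most $|n|$ unimodular terms. So the first thing I would do is record, for $f=\sum_{k}f_ke^{ik\cdot\sigma}$ and $n\ge1$, the identity (valid for $k\ne0$)
\[
\frac{e^{ik\cdot n\omega\alpha}-1}{e^{ik\cdot\omega\alpha}-1}=\sum_{j=0}^{n-1}e^{ijk\cdot\omega\alpha},
\]
together with the symbol $-\sum_{j=1}^{n}e^{ijk\cdot\omega\alpha}$ of $\mathcal S_{-1}^{-1}\mathcal S_n$ and the analogous sums of $|n|$ terms for $n\le-1$; since $\mathcal R_n^\pm=\mathcal S_n\mathcal S_{\pm1}^{-1}$ carries exactly the same symbol as $\mathcal L_n^\pm$, one computation serves both. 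Rather than manipulate Fourier series I would phrase this as the purely algebraic telescoping identity $\mathcal S_{\pm1}\circ\bigl(\sum_j T_{\pm j\omega\alpha}\bigr)=\mathcal S_n$, using $\mathcal S_m=T_{m\omega\alpha}-I$ and $T_aT_b=T_{a+b}$, where $T_c$ is the translation $f\mapsto f(\cdot+c)$. On the zero-average subspace, where $\mathcal S_{\pm1}$ is invertible, this says that $\mathcal S_{\pm1}^{-1}\mathcal S_n$ equals $\pm1$ times a sum of $|n|$ translation operators of the form $T_{j\omega\alpha}$.

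Next I would transfer this to the Gevrey norm. The point is that each translation $T_c:G_{\beta,R}(\mathbb T^d)\to G_{\beta,R}(\mathbb T^d)$ is an \emph{isometry}: $\partial^\gamma(f\circ T_c)=(\partial^\gamma f)\circ T_c$ and the $C^0$-norm on $\mathbb T^d$ is translation invariant, so every summand of $\|\cdot\|_{\beta,R}$ is preserved. Hence $\mathcal R_n^\pm$, which on $\overset{\circ}{G}_{\beta,R}(\mathbb T^d)$ is literally a signed sum of $|n|$ such translations, has operator norm $\le|n|$ by the triangle inequality. For $\mathcal L_n^\pm$ the domain is the full space $G_{\beta,R}(\mathbb T^d)$; since $\mathcal S_m$ annihilates the constant mode one has $\mathcal L_n^\pm f=\mathcal S_{\pm1}^{-1}\mathcal S_n(f-\langle f\rangle)$, i.e.\ a signed sum of $|n|$ translates of $f-\langle f\rangle$, and the same argument applies provided one checks that passing to $f-\langle f\rangle$ is compatible with $\|\cdot\|_{\beta,R}$. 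This last bookkeeping with the mean is the only slightly delicate point, and it should cost nothing beyond the factor $|n|$; the case $n=0$ is trivial since then $\mathcal S_0=0$.

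The hard part, such as it is, is conceptual rather than computational: one must resist estimating $\mathcal L_n^\pm=\mathcal S_{\pm1}^{-1}\mathcal S_n$ by composing bounds for its two factors. By Lemma \ref{homolem}, $\mathcal S_{\pm1}^{-1}$ is a genuine small-divisor operator — bounded only from $\overset{\circ}{\mathcal F}_{\beta,R}$ (hence $\overset{\circ}{G}_{\beta,R}$) into the strictly smaller space $\overset{\circ}{G}_{\beta,R'}$, with a constant blowing up like $\nu^{-1}(R-R')^{-\tau\beta}$ as $R'\uparrow R$. A naive composition would therefore both lose Gevrey radius and drag in the Diophantine constant $\nu$, which would be fatal for the clean, $\nu$-free and loss-free estimate that the quasi-Newton scheme needs at each step. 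The whole content of the lemma is that the geometric-series cancellation removes both defects at once, so the correct move is to discard the factorization and argue directly from the translation-sum representation.
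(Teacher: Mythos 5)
Your proof is correct and follows essentially the same route as the paper: both reduce $\mathcal L_n^\pm$ and $\mathcal R_n^\pm$ to sums of $|n|$ translations via the telescoping identity and then use that translation is an isometry for $\|\cdot\|_{\beta,R}$ (you obtain the formula from the algebraic/Fourier-symbol identity $\mathcal S_{\pm1}\circ\bigl(\sum_j T_{\pm j\omega\alpha}\bigr)=\mathcal S_n$; the paper obtains it by writing down the cohomology equation that $\mathcal L_n^\pm\hat\eta$ satisfies and invoking uniqueness, then separately for $\mathcal R_n^\pm$). Your suspicion that the ``bookkeeping with the mean'' for $\mathcal L_n^\pm$ is the delicate point is in fact well-founded and slightly sharper than the paper's own handling: the step $\|f-\langle f\rangle\|_{\beta,R}\le\|f\|_{\beta,R}$, used tacitly by both you and the paper, can fail, because the $k=0$ (i.e.\ $C^0$) contribution to the norm need not decrease upon centering --- take a real-valued $f$ with $\max f=3$, $\min f=-1$, $\langle f\rangle=2.5$, so that $\|f-\langle f\rangle\|_{C^0}=3.5>3=\|f\|_{C^0}$. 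The triangle inequality only gives $\|f-\langle f\rangle\|_{\beta,R}\le 2\|f\|_{\beta,R}$, so the honest constant for $\mathcal L_n^\pm$ is $2|n|$ rather than $|n|$; this is harmless downstream (the extra factor is absorbed into the combinatorial constant $C$ defining $\delta$), while your bound for $\mathcal R_n^\pm$, whose domain $\overset{\circ}{G}_{\beta,R}$ is already mean-free, is clean as stated.
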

\begin{proof}
    Denoting by $\hat\phi:=\mathcal L_n^{+}\hat\eta=\mathcal S_1^{-1}\mathcal S_n\hat\eta$, $n>0$, then \[\hat\phi(\sigma+\omega\alpha)-\hat\phi(\sigma)=\hat\eta(\sigma+n\omega\alpha)-\hat\eta(\sigma).\]
    By the uniqueness, we have \[
    \hat\phi(\sigma)=\hat\eta(\sigma+(n-1)\omega\alpha)+\cdots+\hat\eta(\sigma)-n\int_{\mathbb T^d}\hat\eta(\sigma)d\sigma.
    \]
    Therefore, \[
    \begin{aligned}
       \|\mathcal L_n^{+}\|_{\mathscr L(G_{\beta,R}(\mathbb T^d),\overset{\circ}{G}_{\beta,R}(\mathbb T^d))}&=\sup_{\|\hat\eta\|_{\beta,R}\le1}\|\mathcal L_n^+\hat\eta\|_{\beta,R}\\&\le\sup_{\|\hat\eta\|_{\beta,R}\le1}\|\hat\eta(\sigma+(n-1)\omega\alpha)+\cdots+\hat\eta(\sigma)-n\int_{\mathbb T^d}\hat\eta(\sigma)d\sigma\|_{\beta,R}\\
       &\le\sup_{\|\hat\eta\|_{\beta,R}\le1}\|\hat\eta(\sigma+(n-1)\omega\alpha)-\int_{\mathbb T^d}\hat\eta(\sigma)d\sigma\|_{\beta,R}+\cdots+\|\hat\eta(\sigma)-\int_{\mathbb T^d}\hat\eta(\sigma)d\sigma\|_{\beta,R}\\
       &\le\sup_{\|\hat\eta\|_{\beta,R}\le1}\|\hat\eta(\sigma+(n-1)\omega\alpha)\|_{\beta,R}+\cdots+\|\hat\eta(\sigma)\|_{\beta,R}\\
       &\le n.
    \end{aligned}
    \]

    Denoting by $\hat\varphi:=\mathcal S_{1}^{-1}\hat\eta$, then \[\hat\varphi(\sigma+\omega\alpha)-\hat\varphi(\sigma)=\hat\eta(\sigma).\]
    For $n>0$, we have \[\mathcal R_n^+\hat\eta=\mathcal S_n\mathcal S_1^{-1}\hat\eta=\mathcal S_n\hat\varphi(\sigma)=\hat\varphi(\sigma+n\omega\alpha)-\hat\varphi(\sigma)=\hat\eta(\sigma+(n-1)\omega\alpha)+\cdots+\hat\eta(\sigma).\]
    Therefore
    $
        \|\mathcal R_n^+\|_{\mathscr L(\overset{\circ}{G}_{\beta,R}(\mathbb T^d),\overset{\circ}{G}_{\beta,R}(\mathbb T^d))}\le n.
    $

    The other cases are proved in the same way.
\end{proof}

Therefore, (\ref{y12}) can be written as
\begin{equation}\label{y13}
\begin{split}
\hat l(\sigma)\mathscr E[\hat h](\sigma)&=\mathcal S_1[\mathcal C_{0,1,1}\mathcal S_{-1}\hat\eta](\sigma)+\sum_{L=2}^\infty\sum_{0\le j<k\le L}\mathcal S_{k-j}[\mathcal C_{j,k,L}\mathcal S_{j-k}\hat\eta](\sigma)\\
&=\mathcal S_1[\mathcal C_{0,1,1}+\sum_{L=2}^\infty\sum_{0\le j<k\le L}\mathcal S_1^{-1}\mathcal S_{k-j}\mathcal C_{j,k,L}\mathcal S_{j-k}\mathcal S_{-1}^{-1}]\mathcal S_{-1}\hat\eta(\sigma)\\
&=:\mathcal S_1[\mathcal C_{0,1,1}+\mathscr G]\mathcal S_{-1}\hat\eta(\sigma).
\end{split}
\end{equation}
We denote
\begin{equation*}
\mathscr G:=\sum_{L=2}^\infty\sum_{0\le j<k\le L}\mathcal S_1^{-1}\mathcal S_{k-j}\mathcal C_{j,k,L}\mathcal S_{j-k}\mathcal S_{-1}^{-1}=\sum_{L=2}^\infty\sum_{0\le j<k\le L}\mathcal L_{k-j}^+\mathcal C_{j,k,L}\mathcal R_{j-k}^-.
\end{equation*}

The equation(\ref{y13}) is equivalent to the following system:
\begin{equation}\label{homo1}
\mathcal S_1\widehat W(\sigma)=\hat l(\sigma)\mathscr E[\hat h](\theta),
\end{equation}
\begin{equation}\label{homo2}
\mathcal S_{-1}[\hat\eta](\sigma)=[\mathcal C_{0,1,1}+\mathscr G]^{-1}\widehat W(\sigma).
\end{equation}
It is easy to see that
\[
\begin{split}
\int_{\mathbb{T}^d}\hat l(\sigma)\mathscr E[\hat h]d\sigma&=\sum_{L\in\mathbb N}\sum_{k=0}^{L}\int_{\mathbb{T}^d}\hat l(\sigma)\partial_\alpha^{(k)}\widehat H_L(\hat\gamma_L^{(-k)}(\sigma))d\sigma\\
&=\sum_{L\in\mathbb N}\sum_{k=0}^{L}\int_{\mathbb{T}^d}\partial_\alpha^{(k)}\widehat H_L(\hat\gamma_L(\sigma))\hat l^{(k)}(\sigma)d\sigma\\
&=\sum_{L\in\mathbb N}\int_{\mathbb{T}^d}d\widehat H_L(\hat\gamma_L(\sigma))=0.
\end{split}
\]

Express $\widehat W=\widehat W^0+\overline{\widehat W}$, where $\widehat W^0$ is a function with zero average and $\overline{\widehat W}$ is the average of $\widehat W$. Applying Lemma \ref{homolem} to (\ref{homo1}), we get $\widehat W^0$.

Since the left-hand side of equation (\ref{homo2}) has zero average, we need the right-hand side to also have zero average.
Therefore
\[
\int_{\mathbb T ^{d}}(\mathcal C_{0,1,1}+\mathscr G)^{-1}[{\widehat W}]d\sigma=0.
\]
Applying Lemma \ref{homolem} to (\ref{homo2}), we get $\hat\eta$. Then $\widehat\Delta=\hat l\cdot\hat\eta$ is the solution of (\ref{modeq}).

\subsection{Algorithm}
The procedure to improve an approximate solution consists of the following steps:
\begin{itemize}
    \item [{\rm (1)}] Solve the cohomology equation (\ref{homo1}) for $\widehat W^0$ with zero average.
    \item [{\rm (2)}] Take $\overline{\widehat W}$ such that 
    \[
        \int_{\mathbb T^d}(\mathcal C_{0,1,1}+\mathscr G)^{-1}[\widehat W^0+\overline{\widehat W}]d\sigma=0.
    \]
    
    \item [{\rm (3)}] Calculate $\widehat W=\widehat W^0+\overline{\widehat W}$.
    \item [{\rm (4)}] Solve the cohomology equation (\ref{homo2}).
    \item [{\rm (5)}] Finally, we obtain the improved solution:
    \[
    \hat h+\widehat\Delta=\hat h+\hat l\cdot \hat\eta.
    \]

\end{itemize}

\subsection{Estimates for one iterative step}
\subsubsection{Estimates for approximate solutions}
For all $\varepsilon,\epsilon>0$, $R_0\ge R>0$, by Lemma \ref{2norm}, for all $f\in G_{\beta,R}(\mathbb T^d)$, we have \[\|f\|_{\mathcal{F}_{\beta,(1-\varepsilon)R-\epsilon}}\le C_{\beta,d}\varepsilon^{-(\beta-1)d}\epsilon^{-\beta d}\|f\|_{\beta,R}.\] 

Denote $R'=(1-\varepsilon_1)R-\epsilon_1$. Applying Lemma \ref{homolem} to the equation (\ref{homo1}), for all $\frac{R_0}{2}<R''<R'<R$, we have 
\begin{align*}
    \|\widehat W^0\|_{\beta,R''}\le\|\widehat W^0\|_{\mathcal F_{\beta,R''}}&\le C(\tau,\beta)\nu^{-1}(R'-R'')^{-\tau\beta}\|\hat l\cdot\mathscr E[\hat h]\|_{\mathcal F_{\beta,R'}}\\
    &\le C(\tau,\beta)\nu^{-1}(R'-R'')^{-\tau\beta}C_{\beta,d}\varepsilon_1^{-(\beta-1)d}\epsilon_1^{-\beta d}\|\hat l\cdot\mathscr E[\hat h]\|_{\beta,R}\\
    &\le C(\tau,\beta)\nu^{-1}(R'-R'')^{-\tau\beta}C_{\beta,d}\varepsilon_1^{-(\beta-1)d}\epsilon_1^{-\beta d}N^+\|\mathscr E[\hat h]\|_{\beta,R}.
\end{align*}

Since $\widehat W=\widehat W^0+\overline{\widehat W}$, the equation for $\overline{\widehat W}$ can be written as 
\begin{equation*}
    \int_{\mathbb T^d}\mathcal C_{0,1,1}^{-1}\overline{\widehat W}d\sigma+\int_{\mathbb T^d}[(\mathcal C_{0,1,1}+\mathscr G)^{-1}-\mathcal C_{0,1,1}^{-1}][\widehat W]d\sigma=-\int_{\mathbb T^d}\mathcal C_{0,1,1}^{-1}\widehat W^0d\sigma.
\end{equation*}
Thus, we have
\begin{equation*}
    |\overline{\widehat W}|\le U\|(\mathcal C_{0,1,1}+\mathscr G)^{-1}\widehat W-\mathcal C_{0,1,1}^{-1}\widehat W\|_{\beta,R''}+U(N^-)^2T\|\widehat W^0\|_{\beta,R''}.
\end{equation*}

Due to the assumption (\rm{H2}) and (\rm{H3}) in Theorem \ref{main thm}, we obtain the following estimate:
\begin{align*}
    \|\mathscr G\|_{\mathscr L(\overset{\circ}{G}_{\beta,R''},\overset{\circ}{G}_{\beta,R''})}&=\sup_{\substack{\|\hat\eta\|_{\beta,R''}=1,\\\hat\eta\in\overset{\circ}{G}_{\beta,R''}}}\le\sum_{L=2}^{\infty}\sum_{0\le j<k\le L}\|\mathcal L_{k-j}^+\mathcal C_{j,k,L}\mathcal R_{j-k}^-\hat\eta\|_{\beta,R''}\\
    &\le\sum_{L=2}^{\infty}\sum_{0\le j<k\le L}|k-j|^2M_L(N^+)^2\\
    &\le C\sum_{L=2}^{\infty}M_LL^4=:\delta.
\end{align*}

By (\rm{H5}) the usual Neumann series shows that the operator $\mathcal C_{0,1,1}+\mathscr G$ is boundedly invertible from $\mathcal G_{\beta,R''}$ to $G_{\beta,R''}$. Moreover, we have
\begin{equation*}\label{y14}
\begin{aligned}
   \|(\mathcal C_{0,1,1}+\mathscr G)^{-1}-\mathcal C_{0,1,1}^{-1}\|_{\mathscr L(G_{\beta,R''},G_{\beta,R''})}&=\|[\mathcal C_{0,1,1}(Id+\mathcal C^{-1}_{0,1,1}\mathscr G)]^{-1}-\mathcal C_{0,1,1}^{-1}\|_{\mathscr L(G_{\beta,R''},G_{\beta,R''})}&\\
&=\|\sum_{j=0}^\infty(-\mathcal C^{-1}_{0,1,1}\mathscr G)^j\mathcal C_{0,1,1}^{-1}-\mathcal C_{0,1,1}^{-1}\|_{\mathscr L(G_{\beta,R''},G_{\beta,R''})}\\
&\le\|\mathcal C_{0,1,1}^{-1}\|_{\beta,R''}\sum_{j=1}^\infty\|\mathcal C_{0,1,1}^{-1}\mathscr G\|^j_{\mathscr L(G_{\beta,R''},G_{\beta,R''})}\\
&\le(N^-)^2T\frac{(N^-)^2T\delta}{1-(N^-)^2T\delta}\\
&\le(N^-)^2T. 
\end{aligned}
\end{equation*}
By assumption (\rm{H5}), we obtain
\begin{align*}
    |\overline{\widehat W}|\le U(N^-)^2T(\|\widehat W^0\|_{\beta,R''}+\overline{\widehat W})+UT(N^-)^2\|\widehat W^0\|_{\beta,R''}\le\|\widehat W^0\|_{\beta,R''}+\frac12|\overline{\widehat W}|.
\end{align*}
Hence, $|\overline{\widehat W}|\le 2\|\widehat W^0\|_{\beta,R''}$ and $\|\widehat W\|_{\beta,R''}\le 3\|\widehat W^0\|_{\beta,R''}$.

Denote $R'''=(1-\varepsilon_2)R''-\epsilon_2$. Applying Lemmata \ref{2norm} and \ref{homolem} to the equation (\ref{homo2}), for all $\frac{R_0}{2}< R^{(4)}<R'''$, we have
\begin{align*}
    \|\hat\eta\|_{\beta,R^{(4)}}&\le\|\hat\eta\|_{\mathcal{F}_{\beta,R^{(4)}}}\le C(\tau,\beta)\nu^{-1}(R'''-R^{(4)})^{-\tau\beta}\|(\mathcal C_{0,1,1}+\mathscr G)^{-1}\widehat W\|_{\mathcal{F}_{\beta,R'''}}\\
    &\le C(\tau,\beta)\nu^{-1}(R'''-R^{(4)})^{-\tau\beta}C_{\beta,d}\varepsilon_2^{-(\beta-1)d}\epsilon_2^{-\beta d}\|(\mathcal C_{0,1,1}+\mathscr G)^{-1}\widehat W\|_{\beta,R''}\\
    &\le C(\tau,\beta)\nu^{-1}(R'''-R^{(4)})^{-\tau\beta}C_{\beta,d}\varepsilon_2^{-(\beta-1)d}\epsilon_2^{-\beta d}6(N^-)^2T\|\widehat W^0\|_{\beta,R''}.
\end{align*}
Since $\widehat\Delta=\hat l\cdot\hat\eta$, it follows that for all $\frac{R_0}{2}<R^{(5)}<R^{(4)}$, we have
\begin{equation*}\label{y19a}
    \begin{aligned}
    \|\widehat\Delta\|_{\beta,R^{(5)}}\le&\|\widehat\Delta\|_{\beta,R^{(4)}}\le N^+\|\hat\eta\|_{\beta,R^{(4)}}\\
    \le& N^+ C(\tau,\beta)\nu^{-1}(R'''-R^{(4)})^{-\tau\beta}C_{\beta,d}\varepsilon_2^{-(\beta-1)d}\epsilon_2^{-\beta d}6(N^-)^2T\\
    &\cdot C(\tau,\beta)\nu^{-1}(R'-R'')^{-\tau\beta}C_{\beta,d}\varepsilon_1^{-(\beta-1)d}\epsilon_1^{-\beta d}N^+\|\mathcal E[\hat h]\|_{\beta,R}.
    \end{aligned}
\end{equation*}
By Lemma \ref{Cauchyest},  we have
\begin{align*}
    \|\partial_{\alpha}\widehat\Delta\|_{\beta,R^{(5)}}\le(R^{(4)}-R^{(5)})^{-\beta}\|\widehat\Delta\|_{\beta,R^{(4)}}.
\end{align*}

By composition Lemma \ref{composition}, if $\|\widehat\Delta\|_{\beta,R^{(5)}}\le\iota$, then $\mathscr E[\hat h+\widehat\Delta]$ is well-defined and
\begin{equation*}
    \begin{aligned}
        \|\mathscr E[\hat h+\hat\Delta]-\mathscr E[\hat h]-D\mathscr E[\hat h]\widehat\Delta\|_{\beta,R^{(5)}}\le\sum_{L=0}^{\infty}\sum_{k=0}^L(L+1)^2M_L\|\widehat\Delta\|_{\beta,R^{(5)}}^2=\sum_{L=0}^\infty(L+1)^3M_L\|\widehat\Delta\|_{\beta,R^{(5)}}^2.
    \end{aligned}
\end{equation*}
Since $\frac{d}{d\theta}\mathscr E[\hat h](\theta)=\partial_{\alpha}\mathscr E[\hat h](\sigma)$, by Lemma \ref{Cauchyest}, 
\[
\|\frac{d}{d\theta}\mathscr E[\hat h]\|_{\beta,R^{(5)}}\le (R-R^{(5)})^{-\beta}\|\mathscr E[\hat h]\|_{\beta,R}.
\]
Therefore,
\[
\begin{aligned}
    \|\mathscr E[\hat h+\widehat\Delta]\|_{\beta,R^{(5)}}&=\|\mathscr E[\hat h+\widehat\Delta]-\mathscr E[\hat h]-D\mathscr E[\hat h]\widehat\Delta+\frac{\widehat\Delta}{\hat l}\frac{d}{d\theta}\mathscr E[\hat h]\|_{\beta,R^{(5)}}\\
    &\le\sum_{L=0}^\infty(L+1)^3M_L\|\widehat\Delta\|_{\beta,R^{(5)}}^2+N^-\|\widehat\Delta\|_{\beta,R^{(5)}}(R-R^{(5)})^{-\beta}\|\mathscr E[\hat h]\|_{\beta,R}.
\end{aligned}
\]

Let $\kappa=R-R^{(5)}$, and take $\varepsilon_1=\frac{\kappa}{8R}$, $\epsilon_1=\frac{\kappa}{8}$, $R'-R''=\frac{\kappa}{8}$, $\varepsilon_2=\frac{\kappa}{8R''}$, $\epsilon_2=\frac{\kappa}{8}$, $R'''-R^{(4)}=\frac{\kappa}{8}$.
For ease of notation, in the following calculations, we set $R_0<1$, $\nu<1$.
Therefore, if $\|\widehat\Delta\|_{\beta,R^{(5)}}\le\iota$, we have the following estimates for one iterative step:
\begin{equation}\label{estDelta}
    \begin{aligned}
        \|\widehat\Delta\|_{\beta,R^{(5)}}\le\|\widehat\Delta\|_{\beta,R^{(4)}}\le&6(N^+)^2(N^-)^2TC(\tau,\beta)^2\nu^{-2}C_{\beta,d}^2(\frac{\kappa}{8})^{-2\tau\beta}\\
        &\cdot(\frac{\kappa}{8R})^{-(\beta-1)d}(\frac{\kappa}{8})^{-\beta d}(\frac{\kappa}{8R''})^{-(\beta-1)d}(\frac{\kappa}{8})^{-\beta d}\|\mathscr E[\hat h]\|_{\beta,R}\\
        \le& C_1\nu^{-2}\kappa^{-2\tau\beta-2(\beta-1)d-2\beta d}\|\mathscr E[\hat h]\|_{\beta, R},
    \end{aligned}
\end{equation}
where $C_1=C_1(N^+,N^-,c,\tau,\beta,d,R_0,T)$. 
\begin{equation}\label{estDeltaderivative}
    \begin{aligned}
        \|\partial_{\alpha}\widehat\Delta\|_{\beta,R^{(5)}}&\le(\frac{\kappa}{4})^{-\beta}C_1\nu^{-2}\kappa^{-2\tau\beta-2(\beta-1)d-2\beta d}\|\mathscr E[\hat h]\|_{\beta, R}\\
        &\le 4^{\beta}C_1\nu^{-2}\kappa^{-\beta-2\tau\beta-2(\beta-1)d-2\beta d}\|\mathscr E[\hat h]\|_{\beta,R}.
    \end{aligned}
\end{equation}
\begin{equation}\label{estsol}
    \begin{aligned}
        \|\mathscr E[\hat h+\widehat\Delta]\|_{\beta,R^{(5)}}\le&\sum_{L=0}^{\infty}(L+1)^3M_L\big(C_1\nu^{-2}\kappa^{-2\tau\beta-2(\beta-1)d-2\beta d}\|\mathscr E[\hat h]\|_{\beta, R}\big)^2\\&+N^-\kappa^{-\beta}C_1\nu^{-2}\kappa^{-2\tau\beta-2(\beta-1)d-2\beta d}\|\mathscr E[\hat h]\|^2_{\beta, R}\\
        \le& C_2 \nu^{-4}\kappa^{-C_3}\|\mathscr E[\hat h]\|^2_{\beta,R},
    \end{aligned}
\end{equation}
where $C_2=C_2(N^+,N^-,c,\tau,\beta,d,R_0,T,\delta)$,
$C_3=C_3(\tau,\beta,d)=\beta+4\tau\beta+4(\beta-1)d+4\beta d$.

\subsubsection{Estimates for the condition numbers}
Let us now estimate the change of non-degeneracy conditions in the iterative step. We denote by $\tilde\gamma_L$ the one corresponding to $\hat h+\widehat\Delta$ instead of $\hat h$. Using the Cauchy estimate and the mean value theorem, we have
\begin{equation*}
    \begin{aligned}
        \|\partial_\alpha^{(0)}\partial_\alpha^{(1)}\widehat H_1(\tilde\gamma_1(\sigma))-\partial_\alpha^{(0)}\partial_\alpha^{(1)}\widehat H_1(\gamma_1(\sigma))\|_{\beta,R^{(5)}}\le2M_1\|\widehat\Delta\|_{\beta,R^{(5)}}\le 2M_1 C_1\nu^{-2}\kappa^{-\frac{C_3}{2}}\|\mathscr E[\hat h]\|_{\beta,R}.
    \end{aligned}
\end{equation*}

We define 
\[C_1\nu^{-2}\kappa^{-\frac{C_3}{2}}\|\mathscr E[\hat h]\|_{\beta,R}=:\chi,\,\,4^{\beta}C_1\nu^{-2}\kappa^{-\frac{C_3}{2}-\beta}\|\mathscr E[\hat h]\|_{\beta,R}=:\chi',\]
then we have $\|\widehat\Delta\|_{\beta,R^{(5)}}\le\chi$, $\|\partial_{\alpha}\widehat\Delta\|_{\beta,R^{(5)}}\le\chi'$.

If $\chi'\le(N^+)^2+N^+$ and $\frac{\kappa}{4}\le1$, then $\chi\le\chi'$ and we obtain:
\begin{equation*}
\begin{split}
\|\widetilde{\mathcal C}_{0,1,1}-\mathcal C_{0,1,1}\|_{\beta,R^{(5)}}&=\|\partial_\alpha^{(0)}\partial_\alpha^{(1)}\widehat H_1(\tilde\gamma_1^{(-1)}(\sigma))(\hat l+\partial_\alpha\widehat\Delta)(\sigma)(\hat l+\partial_\alpha\widehat\Delta)(\sigma-\omega\alpha)\\
&-\partial_\alpha^{(0)}\partial_\alpha^{(1)}\widehat H_1(\hat\gamma_1^{(-1)}(\sigma))\hat l(\sigma)\hat l(\sigma-\omega\alpha)\|_{\beta,R^{(5)}}\\
&\le\|\partial_\alpha^{(0)}\partial_\alpha^{(1)}\widehat H_1(\tilde\gamma_1^{(-1)}(\sigma))-\partial_\alpha^{(0)}\partial_\alpha^{(1)}\widehat H_1(\hat\gamma_1^{(-1)}(\sigma))\|_{\beta,R^{(5)}}\|\hat l(\sigma)\hat l(\sigma-\omega\alpha)\|_{\beta,R^{(5)}}\\
&+\|\partial_\alpha^{(0)}\partial_\alpha^{(1)}\widehat H_1(\tilde\gamma_1^{(-1)}(\sigma))\hat l(\sigma)\partial_\alpha\widehat\Delta(\sigma-\omega\alpha)\|_{\beta,R^{(5)}}\\
&+\|\partial_\alpha^{(0)}\partial_\alpha^{(1)}\widehat H_1(\tilde\gamma_1^{(-1)}(\sigma))\partial_\alpha\widehat\Delta(\sigma)\hat l(\sigma-\omega\alpha)\|_{\beta,R^{(5)}}\\
&+\|\partial_\alpha^{(0)}\partial_\alpha^{(1)}\widehat H_1(\tilde\gamma_1^{(-1)}(\sigma))\partial_\alpha\widehat\Delta(\sigma)\partial_\alpha\widehat\Delta(\sigma-\omega\alpha)\|_{\beta,R^{(5)}}\\
&\le2M_1\chi(N^+)^2+2M_1N^+\chi'+M_1(\chi')^2\\
&\le3M_1\chi'((N^+)^2+N^+).
\end{split}
\end{equation*}

We use the same notations as in Theorem \ref{main thm}, but use the $\sim$ to indicate that they are estimated at the function $\hat h+\widehat\Delta$. Therefore, it is easy to check by the mean value theorem and Cauchy estimates:
\begin{equation}\label{y19}
\begin{split}
\widetilde N^+&:=\|1+\partial_\alpha(\hat h+\widehat\Delta)\|_{\beta,R^{(5)}}\le N^++\|\partial_{\alpha}\widehat\Delta\|_{\beta,R^{(5)}}   \\
\widetilde N^-&:=\|(1+\partial_\alpha(\hat h+\widehat\Delta))^{-1}\|_{\beta,R^{(5)}}\le N^-+\sum_{j=1}^{\infty}\|(-\frac{\partial_\alpha\widehat\Delta}{1+\partial_\alpha\hat h})^j(1+\partial_\alpha\hat h)^{-1}\|_{\beta,R^{(5)}}\\
&\le N^-+\sum_{j=1}^{\infty}(N^-\chi')^jN^-=N^-+\frac{\chi'(N^-)^2}{1-\chi'N^-},\\
\widetilde U^{-1}&:=\left |\int_{\mathbb T^d}\widetilde{\mathcal C}_{0,1,1}^{-1}\right |=\left |\int_{\mathbb T^{d}}\left \{\mathcal C_{0,1,1}[Id+\mathcal C_{0,1,1}^{-1}(\widetilde{\mathcal C}_{0,1,1}-\mathcal C_{0,1,1})]\right \}^{-1}\right |\\
&\ge U^{-1}(1-\sum_{j=1}^{\infty}\|\mathcal C_{0,1,1}^{-1}(\widetilde{\mathcal C}_{0,1,1}-\mathcal C_{0,1,1})\|_{\beta,R^{(5)}}^j)\\
&\ge U^{-1}-\frac{3(N^-)^2TM_1((N^+)^2+N^+)\chi'}{1-3(N^-)^2TM_1((N^+)^2+N^+)\chi'}U^{-1},\\
|\tilde c-c|&:=\left |\langle\frac1{(\hat l+\partial_{\alpha}\widehat{\Delta})\cdot(\hat l+\partial_{\alpha}\widehat{\Delta})\circ T_{-\omega\alpha}}\rangle-\langle\frac1{\hat l\cdot\hat l\circ T_{-\omega\alpha}}\rangle\right |\\
&=\left |\langle\frac{(\hat l\cdot\partial_{\alpha}\widehat\Delta\circ T_{-\omega\alpha}+\partial_{\alpha}\widehat\Delta\cdot(\hat l+\partial_{\alpha}\widehat\Delta)\circ T_{-\omega\alpha})}{(\hat l+\partial_{\alpha}\widehat{\Delta})\cdot(\hat l+\partial_{\alpha}\widehat{\Delta})\circ T_{-\omega\alpha}\cdot\hat l\cdot\hat l\circ T_{-\omega\alpha}}\rangle\right |\\
&\le(\widetilde N^-)^2(N^-)^2\chi'(2N^++\chi').
\end{split}
\end{equation}

\subsection{Convergence for the whole procedure}
Let $ R_n=R_{n-1}-\frac{R_0}{2^2}2^{-n}=R_0(1-\frac{1}{2^2}\sum_{i=1}^n2^{-i})$, then $R_n\to R_{\infty}=\frac{3}{4}R_0$, as $n\to\infty$.

We denote $\hat h_0:=\hat h$. In the first iterative step, we obtain $\widehat\Delta_0$. We define $\hat h_1=\hat h_0+\widehat\Delta_0$. Similarly, we define $\hat h_n=\hat h_{n-1}+\widehat\Delta_{n-1}$ where $\widehat\Delta_{n-1}\in G_{\beta,R_n}(\mathbb T^d)$. Denote $\epsilon_n=\|\mathscr E[\hat h_n]\|_{\beta,R_n}$. We will prove that in the iterative steps, $\mathscr E[\hat h_n]$ is well-defined. Moreover, $N^{+}(\hat h_n,R_n):=\|1+\partial_{\alpha}\hat h_n\|_{\beta,R_n}\le 2N^{+}$, $N^{-}(\hat h_n,R_n):=\|(1+\partial_{\alpha}\hat h_n)^{-1}\|_{\beta,R_n}\le 2N^{-}$, $\ c(\hat h_n):=\left|\big<\frac1{(1+\partial_{\alpha}\hat h_n)\cdot(1+\partial_{\alpha}\hat h_n)\circ T_{-\omega\alpha}}\big>\right|\ge\frac12c$, $\displaystyle U(\hat h_n):=\bigg( \int_{\mathbb T^d}\partial_{\alpha}^{(0)}\partial_{\alpha}^{(1)}\widehat H(\sigma-\omega\alpha+\hat h_n(\sigma-\omega\alpha)\alpha,\sigma+\hat h_n(\sigma)\alpha)(1+\partial_{\alpha}\hat h_n(\sigma))(1+\partial_{\alpha}\hat h_n)(\sigma-\omega\alpha) d\sigma \bigg)^{-1}\le2U$. Therefore, $C_1$, $C_2$ in (\ref{estDelta}),(\ref{estsol}) are uniformly bounded by $C_1'$, $C_2'$.

By (\ref{estsol}), we have
\begin{align*}
     \epsilon_n&\le C_2'\nu^{-4}(\frac{R_0}{2^2}2^{-n})^{-C_3}\epsilon_{n-1}^2\\
    &\le C_2'\nu^{-4}(\frac{R_0}{2^2}2^{-n})^{-C_3}(C_2'\nu^{-4}(\frac{R_0}{2^2}2^{-n+1})^{-C_3}\epsilon_{n-2}^2)^2\\
    &\le(C_2'\nu^{-4}(\frac{R_0}{2^2})^{-C_3})^{1+2+\cdots+2^{n-1}}(2^{C_3n+C_3(n-1)\cdot2+\cdots+C_32^{n-1}})\epsilon_0^{2^n}\\
    &\le(C_2'\nu^{-4}(\frac{R_0}{2^2})^{-C_3})^{2^n}(2^{C_32^{n+1}})\epsilon_0^{2^n}\\
    &=(C_2'\nu^{-4}(\frac{R_0}{2^2})^{-C_3}2^{2C_3}\epsilon_0)^{2^n}=:(A\epsilon_0)^{2^n},
\end{align*}
where $A=C_2'\nu^{-4}(\frac{R_0}{2^2})^{-C_3}2^{2C_3}$.
Hence, if $A\epsilon_0<1$, $\epsilon_n$ decreases faster than any exponential. 

Now we show that $\mathscr E[\hat h_n]$ is well-defined during the iteration.
By (\ref{estDelta}), it follows that
\begin{align*}
    \sum_{j=0}^{n-1}\|\widehat\Delta_j\|_{\beta,R_{n}}&=\sum_{j=0}^{n-1}\|\widehat\Delta_j\|_{\beta,R_{j+1}}\le\sum_{j=0}^{n-1}C'_1\nu^{-2}(\frac{R_0}{2^2}2^{-(j+1)})^{-\frac{C_3}2}\epsilon_j\\
    &\le C'_1\nu^{-2}(\frac{R_0}{2^3})^{-\frac{C_3}2}\sum_{j=0}^{n-1}2^{\frac{C_3}{2}j}(A\epsilon_0)^{2^j}\\
    &\le C'_1\nu^{-2}(\frac{R_0}{2^3})^{-\frac{C_3}2}\sum_{j=0}^{n-1}(2^{\frac{C_3}{2}}A\epsilon_0)^{2^j}.
\end{align*}
Let $2^{\frac{C_3}{2}}A\epsilon_0\le\frac12$, then
\begin{align*}
    \sum_{j=0}^{n-1}\|\widehat\Delta_j\|_{\beta,R_{n}}\le 2C'_1\nu^{-2}(\frac{R_0}{2^3})^{-\frac{C_3}2}2^{\frac{C_3}{2}}A\epsilon_0.
\end{align*}
Hence, if $2C'_1\nu^{-2}(\frac{R_0}{2^3})^{-\frac{C_3}2}2^{\frac{C_3}{2}}A\epsilon_0\le\frac{\iota}{4}$, we have $\sum_{j=0}^{n-1}\|\widehat\Delta_j\|_{\beta,R_n}\le\frac{\iota}{4}$, $\forall\,n\in\mathbb N$. Furthermore, we have $\sum_{j=0}^{\infty}\|\widehat\Delta_j\|_{\beta,R_n}\le\frac{\iota}{4}$. By Lemma \ref{composition}, $\mathscr E[\hat h_n]$ is well-defined.

We will show that the condition numbers are uniformly bounded and we only take $N^+$ as an example. Due to \eqref{estDeltaderivative}, we obtain
\begin{align*}
    N^+(\hat h_n,R_n)&\le N^+(\hat h_{n-1},R_{n-1})+\|\partial_{\alpha}(\hat h_n-\hat h_{n-1})\|_{\beta,R_n}\\
    &\le N^+(\hat h_{n-1},R_{n-1})+4^{\beta}C'_1\nu^{-2}(\frac{R_0}{2^2}2^{-n})^{-\frac{C_3}2-\beta}\epsilon_{n-1}\\
    &\le N^+(\hat h_{n-1},R_{n-1})+4^{\beta}C'_1\nu^{-2}(\frac{R_0}{2^2})^{-\frac{C_3}2-\beta}2^{(\frac{C_3}2+\beta)n}(A\epsilon_0)^{2^{n-1}}\\
    &\le N^+(\hat h_0,R_0)+4^{\beta}C'_1\nu^{-2}(\frac{R_0}{2^2})^{-\frac{C_3}2-\beta}\sum_{j=1}^n(2^{\frac{C_3}{2}+\beta}A\epsilon_0)^{2^{j-1}}.
\end{align*}
Let $2^{(\frac{C_3}{2}+\beta)}A\epsilon_0\le\frac{1}{2}$, then
\begin{align*}
    N^+(\hat h_n,R_n)&\le N^+(\hat h_0,R_0)+4^{\beta}C'_1\nu^{-2}(\frac{R_0}{2^2})^{-\frac{C_3}2-\beta}2^{\frac{C_3}{2}+\beta+1}A\epsilon_0.
\end{align*}
Hence, if $4^{\beta}C'_1\nu^{-2}(\frac{R_0}{2^2})^{-\frac{C_3}2-\beta}2^{\frac{C_3}{2}+\beta+1}A\epsilon_0\le N^+(\hat h_0,R_0)$, then $N^+(\hat h_n,R_n)\le2 N^+(\hat h_0,R_0)$ holds for any $ n\in\mathbb N$. Therefore, condition numbers are uniformly bounded, and we have finished the inductive proof. 

\subsection{Existence of the solution}
Define $\hat h^*=\hat h_0+\sum_{j=0}^\infty\widehat\Delta_j$, then $\hat h^*$ satisfies the conditions in Theorem \ref{main thm}.

We begin by proving $\hat h^*\in G_{\beta,\frac{R_0}{2}}(\mathbb T^d)$.
By definition, we have 
\begin{align*}
    \|\hat h^*\|_{\beta,\frac{R_0}{2}}&\le\|\hat h_0\|_{\beta,\frac{R_0}{2}}+\sum_{j=0}^{\infty}\|\widehat\Delta_j\|_{\beta,\frac{R_0}{2}}\le\|\hat h_0\|_{\beta,R_0}+\sum_{j=0}^\infty \|\widehat\Delta_j\|_{\beta,R_{j+1}}<\infty.
\end{align*}
Thus, $\hat h^*\in G_{\beta,\frac{R_0}{2}}(\mathbb T^d)$.

Since $\sum_{j=0}^{\infty}\|\widehat\Delta_j\|_{\beta,R_{j+1}}\le\frac{\iota}{4}$, it follows that $\mathscr E[\hat h^*]$ is well-defined. Now, we prove that the function $\hat h^*$ is the solution of $\mathscr E[\hat h^*]=0$.
For all $n\in\mathbb N$, we have
\begin{align*}
    \|\mathscr E[\hat h^*]\|_{\beta,\frac{R_0}{2}}&\le\|\mathscr E[\hat h_n]\|_{\beta,\frac{R_0}{2}}+\|\mathscr E[\hat h^*]-\mathscr E[\hat h_n]\|_{\beta,\frac{R_0}{2}}\\
    &\le\epsilon_n+\sum_{L=0}^{\infty}\sum_{k=0}^L(L+1)M_L\|\hat h^*-\hat h_n\|_{\beta,\frac{R_0}{2}}\\
    &\le(A\epsilon_0)^{2^n}+(\sum_{L=0}^{\infty}\sum_{k=0}^L(L+1)M_L)\sum_{j=n}^\infty\|\widehat\Delta_j\|_{\beta,\frac{R_0}{2}}.
\end{align*}
Taking the limit as $n \to \infty$, we conclude that $\mathscr E[\hat h^*] = 0$.

Finally, we establish the solution estimates:
\begin{align*}
    \|\hat h^*-\hat h_0\|_{\beta,\frac{R_0}{2}}&\le\sum_{j=0}^\infty\|\widehat\Delta_j\|_{\beta,\frac{R_0}{2}}\le\sum_{j=0}^\infty\|\widehat\Delta_{j}\|_{\beta,R_{j+1}}\\
    &\le 2C'_1\nu^{-2}(\frac{R_0}{2^3})^{-\frac{C_3}2}2^{\frac{C_3}{2}}A\epsilon_0 .
\end{align*}
Therefore, $\|\hat h^*-\hat h\|_{\beta,\frac{R_0}{2}}\le C_4(N^+, N^-, c, \tau, \beta,d, \iota, R_0, T, \delta, U)\nu^{-6}\epsilon_0$.

\subsection{Local uniqueness of the solution}
Suppose that $\|\hat h^*-\hat h\|_{\beta,\frac{R_0}{2}},\,\|\hat h^{**}-\hat h\|_{\beta,\frac{R_0}{2}}\le r<\frac{\iota}{4}$, $\mathscr{E}[\hat h^*]=\mathscr{E}[\hat h^{**}]=0$, and $\langle\hat h^*\rangle=\langle\hat h^{**}\rangle=0$. Then we have $\|\hat h^{**}-\hat h^*\|_{\frac{R_0}{4}}\le\|\hat h^{**}-\hat h^*\|_{\frac{R_0}{2}}\le 2r$, and
\begin{equation}\label{uu1}
    \begin{aligned}
        0=\mathcal E[\hat h^{**}]-\mathcal E[\hat h^{*}]=&D\mathcal{E}[\hat h^*](\hat h^{**}-\hat h^*)+\sum_{L=0}^{\infty}\sum_{k=0}^L(\partial_{\alpha}^{(k)}\widehat H_L(\gamma_L^{**(-k)}(\theta))-\partial_{\alpha}^{(k)}\widehat H_L(\gamma_L^{*(-k)}(\theta))\\
        &\quad-\sum_{j=0}^L\partial_{\alpha}^{(k)}\widehat H_L(\gamma_L^{*(-k)}(\theta))(\hat h^{**}-\hat h^*)(\sigma-k\omega\alpha+j\omega\alpha))\\
        =&D\mathcal{E}[\hat h^*](\hat h^{**}-\hat h^*)+R.
    \end{aligned}
\end{equation}
By Lemmata \ref{Interpolation ineq} and \ref{composition}, we have
\begin{align*}
    \|R\|_{\beta,\frac{R_0}{4}}&\le\sum_{L=0}^{\infty}\sum_{k=0}^L M_L(L+1)^2\|\hat h^{**}-\hat h^*\|_{\beta,\frac{R_0}{4}}^2=\sum_{L=0}^{\infty}M_L(L+1)^3\|\hat h^{**}-\hat h^*\|_{\beta,\frac{R_0}{4}}^2\\
    &\le\sum_{L=0}^{\infty}M_L(L+1)^3\|\hat h^{**}-\hat h^*\|_{\beta,\frac{R_0}{8}}\|\hat h^{**}-\hat h^*\|_{\beta,\frac{R_0}{2}}\\
    &\le\sum_{L=0}^{\infty}M_L(L+1)^3\|\hat h^{**}-\hat h^*\|_{\beta,\frac{R_0}{8}}2r.
\end{align*}

Denote $\hat l^*=1+\partial_{\alpha}\hat h^*$. Since $$D\mathcal{E}[\hat h^*]\cdot \hat l^*=\frac{d}{d\theta}\mathcal{E}[\hat h^*](\theta)=0,$$
we can write the equation (\ref{uu1}) as:
\begin{equation}\label{uuu1}
    \hat l^* D\mathcal{E}[\hat h^*](\hat h^{**}-\hat h^*)-(\hat h^{**}-\hat h^*)D\mathcal E[\hat h^*]\cdot\hat l^*=-\hat l^*R.
\end{equation}

Notice that the equation (\ref{uuu1}) and the equation (\ref{modeq}) share the same form. Using the uniqueness statements for the solution of equation (\ref{modeq}) and the estimate (\ref{estDelta}), we conclude that 
\begin{equation*}
    \|\hat h^{**}-\hat h^*\|_{\beta,\frac{R_0}{8}}\le C'_1\nu^{-2}(\frac{R_0}{8})^{-\frac{C_3}{2}}\|R\|_{\beta,\frac{R_0}{4}}\le C'_1\nu^{-2}(\frac{R_0}{8})^{-\frac{C_3}{2}}\sum_{L=0}^{\infty}M_L(L+1)^3\|\hat h^{**}-\hat h^*\|_{\beta,\frac{R_0}{8}}2r.
\end{equation*}
Therefore, when $r>0$ is small enough, we obtain $\hat h^{**}=\hat h^*$. This completes the proof of local uniqueness of the solution in Theorem \ref{main thm}.

\section*{Acknowledgments}
X. Su is supported by the National Natural Science Foundation of China (Grant No. 12371186). 
The authors thank  R. de la Llave and D. Sauzin for a careful reading of the manuscripts and many helpful suggestions and comments.

\bibliographystyle{alpha}
\bibliography{reference}
\end{document}